\newtheorem{theorem}{Theorem}[section]
\newtheorem{proposition}[theorem]{Proposition}
\newtheorem{lemma}[theorem]{Lemma}
\newtheorem{corollary}[theorem]{Corollary}
\theoremstyle{remark}
\newtheorem{remark}[theorem]{Remark}
\newtheorem{example}[theorem]{Example}
\numberwithin{equation}{theorem}
\renewcommand{\(}{\left(}
\renewcommand{\)}{\right)}
\newcommand{\Ric}{\operatorname{Ric}}
\begin{document}
\title[Gromov-Hausdorff precompactness of domains]{Precompactness of domains with lower Ricci curvature bound under Gromov-Hausdorff topology}
\author{Shicheng Xu}

\address{School of Mathematical Sciences, Capital Normal University, Beijing, China \newline
\indent Academy for Multidisciplinary Studies, Capital Normal University, Beijing, China}
\email{\href{mailto:shichengxu@gmail.com}{shichengxu@gmail.com}}

\date{\today}
\subjclass[2020]{53C21; 53C23; 54A20}
\keywords{Gromov-Hausdorff convergence, precompactness, domain, manifold with boundary, normal cover, Hopf-Rinow theorem}
\begin{abstract}
Based on a quantitative version of the classical Hopf-Rinow theorem in terms of the doubling property, we prove new precompactness principles in the (pointed) Gromov-Hausdorff topology for domains in (maybe incomplete) Riemannian manifolds with a lower Ricci curvature bound, 
which are applicable to those with weak regularities considered in PDE theory, and the covering spaces of balls naturally appear in the study of local geometry and topology of manifolds with lower curvature bounds.
All the new principles are more general than those earlier known for manifolds with smooth boundary, and improves those for manifolds with non-smooth boundary. 
\end{abstract}
{\maketitle}

\section{Introduction}\label{sec:intro}

The Gromov-Hausdorff topology has been one of the fundamental tools in the study of Riemannian manifolds under various curvature bounds. At the core is Gromov's precompactness principle \cite[Proposition 3.5]{GLP1981} (see also \cite[7.4.2 Compactness Thereom]{Burago-Burago-Ivanov2001}), which implies that the family of all complete Riemannian manifolds with a uniform lower Ricci curvature bound is precompact in the pointed Gromov-Hausdorff topology. The study of their limit spaces have been important and fruitful fields in the last 30 years, including Cheeger-Fukaya-Gromov's collapsing theory under bounded sectional curvature (\cite{Fukaya1988},\cite{Cheeger-Gromov1986},\cite{Cheeger-Gromov1990},\cite{Cheeger-Fukaya-Gromv}), Cheeger-Colding theory under lower bounded Ricci curvature (\cite{Cheeger-Colding1996},\cite{Cheeger-Colding1997}), and their synthetic generalizations (\cite{Burago-Gromov-Perelman1992},\cite{St1,St2},\cite{LV},\cite{AGS}).

In this paper we provide natural precompactness principles for domains in a Riemannian manifold with a lower Ricci curvature bound, which are suitable for those with weak regularities (\cite{Adams-Fournier1977}, \cite{Reifenberg1960}) considered in PDE theory, and are more general than the earlier known precompactness for manifolds with smooth boundary (\cite{Yamaguchi-Zhang2019},\cite{Knox2012},\cite{Wong2008},\cite{Anderson-Katsuda-Kurylev-Lassas-Taylor2004},\cite{Kodani1990}), and also substantially improves those for manifolds without boundary regularity conditions (\cite{Perales-Sormani2014}, \cite{Perales2020}, cf. a survey \cite{Perales2016}).

\subsection{Precompactness of domains whose boundary admits a uniform width}
Before proceeding into the main statements, let us fix some terminology first. Throughout the paper all (incomplete) Riemannian manifolds are assumed to have no boundary, i.e. every sufficient small open ball at a point is diffeomorphic to an Euclidean space. A domain $W$ in a Riemannian manifold $M$ is defined to be a connected subset whose topological interior $W^\circ$ and closure in $M$ satisfy $\overline{W^\circ}=\overline{W}$, where $W^\circ$ is not required to be connected. The natural length metric on $W$ induced by its metric tensor is denoted by $d_W$. The \emph{intrinsic boundary} $\partial W$ of $W$ is defined to be $W^\circ$'s topological boundary  in the completion $\overline{(W, d_W)}$. We define the $r$-interior $W_r^\circ$ to be $W_r^\circ=\{x\in W: d_W(x, \partial W)>r\}$. For a subset $S\subset W$, the open $r$-neighborhood of $S$ in $(W,d_W)$ are denoted by  $B_r(S,W)=\{x\in W: d_W(x,S)<r\}$.

\begin{theorem}\label{mthm-1}
	Let $r_0,t_0>0$ and $s(t)$ be a positive function on $t$ such that $s(t)\to 0$ as $t\to 0$. Let $\mathcal{D}(n, r_0, t_0, s(t))=\{(W, d_W, p)\}$ be the family of all domains $W$ in (maybe incomplete) Riemannian $n$-manifolds $M$, which are equipped with their length metric $d_W$ and base point $p$, such that 
	\begin{enumerate}\numberwithin{enumi}{theorem}
		\item\label{mthm-1.1} the $r_0$-neighborhood $B_{r_0}(W,M)$ of $W$ has a complete closure in $M$, and has Ricci curvature $\operatorname{Ric}_{B_{r_0}(W,M)}\ge - (n-1)$, and
		\item\label{mthm-1.3} for any $0<t\le t_0$, the closure of $s(t)$-neighborhood of $W_t^\circ$ in $(W,d_W)$ contains the whole $W$, i.e.,  $\overline{ B_{s(t)}(W_t^\circ,W)}\supset W$. (In particular, $W_t$ is nonempty)
	\end{enumerate}
	Then $\mathcal{D}(n, r_0, t_0, s)$ is precompact in the pointed Gromov-Hausdorff topology.
\end{theorem}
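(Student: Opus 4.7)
The plan is to verify Gromov's packing criterion: for every $R, \varepsilon > 0$, bound by a constant $N = N(n, r_0, t_0, s, R, \varepsilon)$ the cardinality of any $\varepsilon$-separated subset of the intrinsic ball $B_R(p, W)$, uniformly over $(W, d_W, p) \in \mathcal{D}(n, r_0, t_0, s)$. The engine is Bishop-Gromov volume comparison in the ambient $M$, available thanks to condition (1), coupled with the uniform width hypothesis (2).

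The first move is a reduction to the $t$-interior. Given $\varepsilon > 0$, choose $t = t(\varepsilon) \le t_0$ with $s(t) < \varepsilon/10$, using $s(t) \to 0$. For any $\varepsilon$-separated set $\{x_i\} \subset B_R(p, W)$, condition (2) yields $y_i \in W_t^\circ$ with $d_W(x_i, y_i) < 2 s(t) < \varepsilon/5$, so $\{y_i\}$ is $(3\varepsilon/5)$-separated inside $W_t^\circ \cap B_{R+\varepsilon/5}(p, W)$. Bounding its cardinality will suffice.

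The key geometric input is that each interior ball $B_\rho(y_i, W)$ with $\rho \le t$ carries a definite amount of ambient volume. Since $d_W(y_i, \partial W) > t$, no path in $W$ of length less than $t$ starting at $y_i$ can meet $\partial W$, so $B_\rho(y_i, W) \subset W^\circ$ sits genuinely inside $M$. Appealing to the quantitative Hopf-Rinow / doubling comparison developed earlier in the paper --- this is where both the Ricci bound from (1) and the width control from (2) enter --- I would extract a constant $c = c(n) > 0$ such that the ambient ball $B_{c\rho}(y_i, M)$ lies inside $B_\rho(y_i, W)$; i.e., near a $t$-interior point the intrinsic metric is comparable to the ambient one at scale $\rho$.

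With this comparison in hand, fix $\rho = \varepsilon/4$ so that the $B_\rho(y_i, W)$ are pairwise disjoint. Then the $B_{c\rho}(y_i, M)$ are disjoint, and all contained in $B_{R+\rho}(p, M)$ via $d_M \le d_W$. Bishop-Gromov inside the complete closure $\overline{B_{r_0}(W, M)}$ centered at $y_i$ gives, when $2R + \rho \le r_0$,
\[
\operatorname{vol}_M\bigl(B_{c\rho}(y_i, M)\bigr) \;\ge\; \frac{v_{-1}(c\rho)}{v_{-1}(2R + \rho)}\,\operatorname{vol}_M\bigl(B_{R + \rho}(p, M)\bigr),
\]
where $v_{-1}$ is the model volume in constant curvature $-1$. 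Summing over $i$ and dividing yields $N \le v_{-1}(2R+\rho)/v_{-1}(c\rho)$. When $R > r_0$ the Bishop-Gromov step must be iterated along a chain of overlapping $r_0$-patches inside $B_{r_0}(W, M)$; the final constant $N$ then also depends on $r_0$ and the number of chaining steps.

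The main obstacle is the inclusion $B_{c\rho}(y_i, M) \subset B_\rho(y_i, W)$. Without a uniform width control, a domain $W$ could pinch near $y_i$ in a direction undetected by $d_W(y_i, \partial W) > t$, making $d_W$ and $d_M$ diverge at scales below $t$ and ruining the volume packing. Ruling out this phenomenon, via condition (2) together with ambient doubling from (1), is precisely the content of the quantitative Hopf-Rinow theorem advertised in the abstract; its precise form dictates how the comparison constant $c$ (and hence $N$) depends on $n, r_0, t_0$, and the modulus $s$.
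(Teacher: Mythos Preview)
Your strategy is the right one and tracks the paper's proof: reduce to the $t$-interior via \eqref{mthm-1.3}, identify small intrinsic and ambient balls there, pack using Bishop--Gromov from \eqref{mthm-1.1}, and extend to large $R$ by iteration. Two points, however, are misplaced.

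The inclusion you call the ``main obstacle'' is in fact elementary. For $y\in W_t^\circ$ and $\rho\le t$ one has $B_\rho(y,M)=B_\rho(y,W)$ exactly, with $c=1$: any $M$-geodesic of length $<t$ issuing from $y$ that left $W$ would exhibit a point of the intrinsic boundary $\partial W$ at $d_W$-distance $<t$ from $y$, contradicting $y\in W_t^\circ$. No doubling or Hopf--Rinow input is needed here, and condition \eqref{mthm-1.3} plays no role beyond the initial move from $x_i$ to $y_i$. There is no ``pinching undetected by $d_W(y,\partial W)>t$''; that inequality rules it out by definition. (A side effect: you must have $\rho\le t$, so set $\rho=\min\{t(\varepsilon),\varepsilon/4\}$ rather than $\varepsilon/4$.)

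Where the Hopf--Rinow-type machinery actually enters is your chaining step. Condition \eqref{mthm-1.1} only supports Bishop--Gromov for $M$-balls of radius $\le r_0$ centered in $W$, so a single inequality $N\le v_{-1}(2R+\rho)/v_{-1}(\rho)$ is unavailable once $2R>r_0$; worse, $B_{R'}(p,M)$ need not even have finite volume. The paper therefore first proves a \emph{uniform} covering bound for every intrinsic ball $\bar B_{r_0}(x,W)$, $x\in W$ (Lemma~\ref{lem-boundary}, which is precisely your local packing argument), and then bootstraps along the length structure of $(W,d_W)$ via the Hopf--Rinow iteration of Lemma~\ref{lem-2-1}: given control on $\bar B_r(p,W)$, a finite net there together with the uniform $r_0$-ball bound yields control on $\bar B_{r+r_0/10}(p,W)$. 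Your ``chain of overlapping $r_0$-patches'' is this argument, but it must be run in $(W,d_W)$, not in $M$, and it is here---not in the ball comparison---that the quantitative Hopf--Rinow idea and the dependence on $r_0$ and $s$ enter.
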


\begin{remark}
	All the conditions \eqref{mthm-1.1}-\eqref{mthm-1.3} are necessary in Theorem \ref{mthm-1}. 
	
	Indeed, \eqref{mthm-1.1} says that $W$ is $r_0$-definite away from the incomplete ``boundary'' of $M$, which avoids the possiblity of local concentration arising from the incompleteness of $M$. A counterexample that violates \eqref{mthm-1.1} can be easily found on the universal cover of a sphere with two antipodal points removed. For more details, see Example \ref{ex-petersen} below.

	A domain only satisfying \eqref{mthm-1.1}, in general, may have a too highly twisted shape around its boundary to admit the Gromov-Hausdorff precompactness; see Example \ref{ex-distortion} below. Condition \eqref{mthm-1.3} says that $W$ can be uniformly approximated from its interior, which will imply a uniform Gromov-Hausdorff precompactness locally around any boundary point of $W$; see Lemma \ref{lem-boundary} in Sec. \ref{sec-GH-precom}.
	
	Hereafter we call a domain $W$ satisfying \eqref{mthm-1.3} has a \emph{$s(t)$-undistorted boundary with width $\ge$ $t_0$}, or a $(s(t),t_0)$-undistorted domain for briefly.

\end{remark}
\begin{remark}
	By Lemma \ref{lem-balls} and Remark \ref{rem-condition1} below, the condition
	\eqref{mthm-1.1} can be replaced by a uniform Gromov-Hausdorff precompactness of all $r_0$-balls $B_{r_0}(x,M)$ at $x\in W$, i.e.
	\begin{enumerate}\numberwithin{enumi}{theorem}
		\item \label{mthm-1.4}
		$\{B_{r_0}(x,M): \forall x\in W, \text{ and } \forall W \in \mathcal D(r_0,t_0,s(t))\}$ is precompact in the Gromov-Hausdorff topology.
	\end{enumerate}

	The conditions \eqref{mthm-1.1} and \eqref{mthm-1.3} can be also replaced by other natural conditions suitable for manifolds with boundary without any regularity condition; see Theorem \ref{thm-int} below.
\end{remark}
\begin{remark}
	By the same proofs, Theorem \ref{mthm-1} and Theorem \ref{thm-lipschitz-type} below hold for domains in any length space $(X,d,\mu)$ with a positive Borel measure $\mu$ satisfying a uniform local doubling property, e.g., a $\operatorname{CD}(K,N)$ or $\operatorname{RCD}^*(K,N)$-space (\cite{St1,St2}, \cite{LV}, \cite{AGS}).
\end{remark}

\begin{remark}
	The (pointed) Gromov-Hausdorff distance always makes sense for metric spaces in which any bounded subset has a compact completion. Hence by our definition of domains, 
	both components of $W$'s interior $(W_i^\circ, d_{W_i^\circ})$ and $W$'s closure $(\overline{W_i}, d_{\overline{W_i}})$ with respect to their length metric are contained in Theorem \ref{mthm-1}. Moreover, Theorem \ref{mthm-1} also holds for the completion of $(W, d_W)$ with its length metric. Note that in general, the completion of $(W_i^\circ,d_{W_i^\circ})$ may be quite different from the closure $(\overline{W_i},d_{\overline{W_i}})$ in $M$.
\end{remark}

Let us recall from the standard PDE theory that, if $W$ is a domain with a uniform Lipschitz boundary in an Euclidean space $\mathbb R^n$, then there is $\tau>0$ such that $\overline{B_{\tau t}(W_t^\circ,\mathbb R^n)}\supset W$ (e.g.,  \cite[6.7 p143]{Gilbarg-Trudinger2001}), where $\tau$ depends the Lipschitz constant of the function by which $\partial \Omega$ is locally represented as its graph.

Similarly, if $s(t)$ can be chosen to be $\tau t$, then we call a (maybe unbounded) $(s(t),t_0)$-undistorted domain is of \emph{$\tau$-Lipschtiz type}. By defintion, $\tau\ge 1$. In the next theorem we give a uniform estimate on the global doubling propery for such domains. For simplicity we denote an open $r$-ball $B_r(p)$ when the ambient space is clear. 

\begin{theorem}\label{thm-lipschitz-type}
	Given an $\tau\ge 1$, $r_0,t_0>0$, there is a positive funtion $A_{n,r_0,t_0,\tau}:(0,+\infty)\to [1,\infty)$ monotone nondecreasing in $r$, such that any $(\tau t, t_0)$-undistorted domain $W$ of  $\tau$-Lipschitz type in $\mathcal{D}(n,r_0,t_0,\tau t)$, equipped with its length metric $d_W$ and Riemannian volume $\mu=\operatorname{vol}_g$, is globally $A_{n,r_0,t_0,\tau}(r)$-doubling, i.e.,
	for any $r>0$ and $x\in W$, 
	\begin{equation}
		0<\mu(B_r(x)) \le A_{n,r_0,t_0,\tau}(r)\cdot \mu(B_{\frac{r}{2}}(x))<\infty.
	\end{equation}
\end{theorem}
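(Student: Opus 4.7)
\textbf{The plan} is to establish volume doubling at small scales via Bishop-Gromov and the $\tau$-Lipschitz condition, and then bootstrap to all scales by a chaining-and-packing argument that uses the pointed Gromov-Hausdorff precompactness from Theorem \ref{mthm-1}.

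\textbf{Small-scale step.} Fix $r_1=\min\{\tfrac{4}{5}r_0,\,4\tau t_0\}$ and let $r\le r_1$. Setting $t=r/(4\tau)\in(0,t_0]$, the $\tau$-Lipschitz undistortedness produces $y\in W_t^\circ$ with $d_W(x,y)\le\tau t=r/4$. The key sub-lemma, which I expect to be already at hand from the quantitative Hopf-Rinow theorem advertised in the abstract, is the identification $B_t^W(y)=B_t^M(y)$ for interior points $y\in W_t^\circ$. Granting this, one has
\[
B_t^M(y)=B_t^W(y)\subset B_{r/2}^W(x),\qquad B_r^W(x)\subset B_{5r/4}^M(y)\subset B_{r_0}(W,M),
\]
so that Bishop-Gromov comparison on the Ricci-bounded region $B_{r_0}(W,M)$ yields
\[
\mu(B_r^W(x))\le\mu(B_{5r/4}^M(y))\le C_0(n,\tau,r)\,\mu(B_t^M(y))\le C_0(n,\tau,r)\,\mu(B_{r/2}^W(x)),
\]
where $C_0(n,\tau,r)$ is the ratio of the $n$-dimensional hyperbolic-model ball volumes of radii $5r/4$ and $r/(4\tau)$. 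This is the desired doubling at any scale $r\le r_1$, with a constant monotone in $r$.

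\textbf{Large-scale step.} For $r>r_1$, cover $B_r^W(x)$ by a maximal disjoint packing $\{B_{r_1/8}^W(y_j)\}_{j=1}^N$; the enlargements $\{B_{r_1/4}^W(y_j)\}$ then cover $B_r^W(x)$, and Theorem \ref{mthm-1} forces $N\le N(n,r_0,t_0,\tau,r)$. Choose an index $j_0$ with $y_{j_0}$ close enough to $x$ that $B_{r_1/4}^W(y_{j_0})\subset B_{r/2}^W(x)$, giving $\mu(B_{r/2}^W(x))\ge\mu(B_{r_1/4}^W(y_{j_0}))$. Since $(W,d_W)$ is a length space, any $y_j$ may be joined to $y_{j_0}$ by a chain of at most $\lceil 16r/r_1\rceil$ intermediate points at pairwise $d_W$-distance $\le r_1/8$, and the small-scale doubling applied at each step yields $\mu(B_{r_1/4}^W(y_j))\le C_0^{16r/r_1}\,\mu(B_{r_1/4}^W(y_{j_0}))$. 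Summing over $j$,
\[
\mu(B_r^W(x))\le N\cdot C_0^{16r/r_1}\,\mu(B_{r/2}^W(x)),
\]
which is the claim with $A_{n,r_0,t_0,\tau}(r):=N(r)\cdot C_0^{16r/r_1}$, monotone nondecreasing in $r$.

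\textbf{Main obstacle.} The crux is the identification $B_t^W(y)=B_t^M(y)$ for $y\in W_t^\circ$. The nontrivial inclusion $B_t^M(y)\cap W\subset B_t^W(y)$ demands that no $M$-minimizing geodesic of length less than $t$ starting at $y$ escapes $W$; if one did, the exiting trajectory would furnish a $d_W$-Cauchy sequence in $W^\circ$ whose limit in $\overline{(W,d_W)}$ lies in $\partial W$ at $d_W$-distance less than $t$ from $y$, contradicting $y\in W_t^\circ$. Making this rigorous and uniform over the family $\mathcal D(n,r_0,t_0,\tau t)$---reconciling $W$'s extrinsic behaviour in $M$ with the intrinsic boundary defined via metric completion---is precisely what the paper's quantitative Hopf-Rinow theorem is designed to deliver. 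Once that sublemma is granted, the remainder is Bishop-Gromov comparison plus the metric chaining estimate above.
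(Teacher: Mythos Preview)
Your two-step strategy---local doubling via the $\tau$-Lipschitz condition and Bishop--Gromov, then a chaining-plus-packing bootstrap to all scales---is correct and is exactly what the paper does. The paper packages the bootstrap as a standalone ``Quantitative Hopf--Rinow'' theorem (Theorem~\ref{thm-2-1}: local doubling on a length space implies global doubling), proved from the precompactness Lemma~\ref{lem-2-1} together with the same chain-along-a-geodesic inequality you wrote down; it then applies this to $(W,d_W,\mu)$ after checking $(\rho,A_0)$-local doubling by precisely your small-scale argument. Your variant invokes Theorem~\ref{mthm-1} for the packing bound rather than Lemma~\ref{lem-2-1}; this is logically fine (Theorem~\ref{mthm-1}'s proof does not use Theorem~\ref{thm-lipschitz-type}), though the paper's route keeps Theorem~\ref{thm-lipschitz-type} independent of the later Section~\ref{sec-GH-precom}.

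One point to correct: you have misread what the ``quantitative Hopf--Rinow theorem'' does. It is \emph{not} the tool that delivers the identification $B_t^W(y)=B_t^M(y)$ for $y\in W_t^\circ$; rather, it is exactly the local-to-global doubling bootstrap you carried out in your large-scale step. The ball identification itself is elementary---your own Cauchy-sequence sketch is already a complete proof, since a minimal $M$-geodesic of length $<t$ exiting $W$ would produce a point of the intrinsic boundary $\partial W\subset\overline{(W,d_W)}$ at $d_W$-distance $<t$ from $y$---and the paper simply asserts it in passing (``$B_{t_j}(y_j,W)$ coincides with $B_{t_j}(y_j,M)$ as subsets''). So there is no obstacle here; you have already handled it.
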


It is easy to check that domains satisfying the Reifenberg-flat \cite{Reifenberg1960} or cone condition \cite{Adams-Fournier1977} are of Lipschitz type defined here, and hence satisfy a global doubling property; see Proposition \ref{prop-examples} below. More generally, sub-level sets $W$ of a distance function in a complete Riemannian manifold $M$, e.g. a $r$-neighborhood of a connected subset $K$, are $(t, r)$-undistorted domains of $1$-Lipschitz type. 

An application of Theorem \ref{mthm-1} is an improvement of the almost rigidity of warped product by Cheeger-Colding \cite{Cheeger-Colding1996} under the hypothesis of \cite[Theorem 3.6]{Cheeger-Colding1996}; see Corollary \ref{cor-1} and Theorem \ref{thm-almost-rigidity} in Sec. \ref{sec:precompact-non-length}.

\subsection{Precompactness of covering spaces of metric balls}
A partial motivation of Theorem \ref{mthm-1} is the lack of precompactness of the local universal covers of Riemannian manifolds.
The universal cover $\pi:(\widetilde{B_r(p)},\tilde p)\to (B_r(p),p)$ of a ball has taken an important role in the study of local geometry and topology of spaces with a lower Ricci or sectional curvature bound, such as the Margulis lemma (\cite{Fukaya-Yamaguchi1992},\cite{Cheeger-Colding1996},\cite{Kapovitch-Wilking2011},\cite{Xu-Yao}), collapsing of manifolds with local bounded covering geometry (\cite{Chen-Rong-Xu2016,Chen-Rong-Xu2018},\cite{Huang-Kong-Rong-Xu2020}), the existence of universal cover of Ricci limit spaces by \cite{Sormani-Wei2001,Sormani-Wei2004}, and the semilocal simply-connectedness of a Ricci-limit space proved recently by \cite{Pan-Wei2019} and \cite{Wang2021}.
However, as pointed out by Sormani-Wei \cite{Sormani-Wei2004}, a sequence of Riemannian universal covers $\widetilde{B_r(p_i)}$ generally admit no converging subsequence in the pointed Gromov-Hausdorff topology, even $B_r(p_i)$ lies in manifolds with a uniform lower sectional curvature bound; see Examples \ref{ex-sw} and \ref{ex-petersen} below.

A corollary of the main theorems is a new precompactness of normal covers of balls which are readily applied in all the settings above.

\begin{corollary}[Precompactness of normal covers]\label{cor-pre}
	 For any $0<r_1<r_2$,
	 let $\widehat{B}(p,r_1,r_2)$ be a component of the preimage $\pi^{-1}(B_{r_1}(p))$ in the Riemannian universal cover $\pi:(\widetilde{B_{r_2}(p)},\tilde p)\to (B_{r_2}(p),p)$, where $B_{r_2}(x)$ is relative compact in a Riemannian $n$-manifold and has $\operatorname{Ric}\ge -(n-1)$.  Let $\imath:\pi_1(B_{r_1}(p),p)\to \pi_1(B_{r_2}(p),p)$ be the homomorphism between fundamental groups induced by the inclusion map.
	\begin{enumerate}
		\item \label{cor-pre-i1}
		$\widehat{B}(p,r_1,r_2)$ equipped with its length metric and Riemannian volume is globally $A_{n,r_1,r_2}(r)$-doubling, where $A_{n,r_1,r_2}(r)$ is a positive function non-decreasing in $r$ when $n$, $r_1$ and $r_2$ are fixed.
		\item \label{cor-pre-i2}  The family consisting of all normal covering spaces of $B_{r_1}(p)$, $$\(\widetilde{B_{r_1}(p)}/\operatorname{Ker}(\imath), \hat p,  \Gamma(r_1,r_2)\)=(\widehat{B}(p,r_1,r_2),\hat p, \operatorname{Im}(\imath))\to (B_{r_1}(p),p),$$ together with deck transformations by $\Gamma(r_1,r_2)=\pi_1(B_{r_1}(p),p)/\operatorname{Ker}(\imath)$,
		is precompact in the pointed equivariant Gromov-Hausdorff topology.
	\end{enumerate}
\end{corollary}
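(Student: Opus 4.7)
The plan is to realize $W := \widehat{B}(p,r_1,r_2)$ as a $(t,r_1/2)$-undistorted domain of $1$-Lipschitz type inside the (possibly incomplete) Riemannian manifold $M := \widetilde{B_{r_2}(p)}$ and then invoke Theorems \ref{mthm-1} and \ref{thm-lipschitz-type}. Throughout, write $\pi\colon M\to B_{r_2}(p)$ for the covering projection (a local isometry) and $f(\tilde z):=d(\pi(\tilde z),p)$ for the pull-back of the ambient distance from $p$.

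Set $r_0:=(r_2-r_1)/2$ and $t_0:=r_1/2$. For condition \eqref{mthm-1.1} of Theorem \ref{mthm-1}, the Ricci lower bound $\Ric\ge -(n-1)$ is automatic since $\pi$ is a local isometry; and the closure of $B_{r_0}(W,M)$ in $M$ is complete because any Cauchy sequence $\{\tilde y_i\}$ there projects to a Cauchy sequence in the relatively compact set $\overline{B_{r_1+r_0}(p)}\subset B_{r_2}(p)$, converging to some $y$, whose tail then lies in a single sheet of an evenly-covered neighborhood of $y$ and lifts coherently to a point $\tilde y\in M$. For condition \eqref{mthm-1.3} with $s(t)=t$, I would first observe that $f(\tilde z)<r_1-t$ forces $\tilde z\in W_t^\circ$: any $d_W$-path of length $<t$ from $\tilde z$ projects to a path in $B_{r_2}(p)$ whose endpoint lies at ambient distance $<r_1$ from $p$, so it stays in $B_{r_1}(p)$ and lifts back into the component $W$. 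Given $\tilde x\in W$ with $x=\pi(\tilde x)$, if $f(\tilde x)<r_1-t$ then $\tilde y=\tilde x$ works; otherwise pick an ambient path $\gamma$ from $x$ to $p$ of length $L<r_1$. The estimate $d(\gamma(s),p)\le L-s<r_1$ forces $\gamma\subset B_{r_1}(p)$, so its lift from $\tilde x$ stays in $W$; walking distance $t$ along this lift (note $L\ge d(x,p)\ge r_1-t\ge t$) gives $\tilde y\in W$ with $f(\tilde y)\le L-t<r_1-t$, hence $\tilde y\in W_t^\circ$ and $d_W(\tilde x,\tilde y)\le t$. Thus $W\subset\overline{B_t(W_t^\circ,W)}$, placing $W$ in $\mathcal{D}(n,r_0,t_0,t)$ with parameters depending only on $n,r_1,r_2$.

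Part \ref{cor-pre-i1} then follows directly from Theorem \ref{thm-lipschitz-type} with $\tau=1$, producing the monotone doubling function $A_{n,r_1,r_2}(r)$. For part \ref{cor-pre-i2}, Theorem \ref{mthm-1} yields pointed Gromov-Hausdorff precompactness of the family $\{(W,d_W,\hat p)\}$; since each deck group $\Gamma(r_1,r_2)$ acts on the corresponding $W$ by isometries, a standard equivariant extraction in the spirit of Fukaya-Yamaguchi upgrades this to pointed equivariant Gromov-Hausdorff precompactness of $(W,\hat p,\Gamma(r_1,r_2))$. The subtlest step I anticipate is the completeness verification in condition \eqref{mthm-1.1}: one must combine the relative compactness of $\overline{B_{r_2}(p)}$ with evenly-covered neighborhoods to transport the limit point of a Cauchy sequence from $B_{r_2}(p)$ back up to the (possibly topologically wild) universal cover $M$.
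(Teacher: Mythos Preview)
Your proposal is correct and follows essentially the same route as the paper. Both arguments realize $\widehat{B}(p,r_1,r_2)$ as a $1$-Lipschitz-type undistorted domain in $\widetilde{B_{r_2}(p)}$ with $r_0=(r_2-r_1)/2$, then invoke Theorem~\ref{thm-lipschitz-type} for \eqref{cor-pre-i1} and Theorem~\ref{mthm-1} together with Fukaya--Yamaguchi's equivariant extraction for \eqref{cor-pre-i2}. The paper compresses the verification of \eqref{mthm-1.1} and \eqref{mthm-1.3} into the phrase ``by the lifting property'', whereas you spell it out via the pull-back $f=d(\pi(\cdot),p)$ and the evenly-covered neighborhood argument; this is a genuine service to the reader.

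Two small remarks. First, in your justification that $f(\tilde z)<r_1-t$ forces $\tilde z\in W_t^\circ$, the phrase ``any $d_W$-path of length $<t$'' is circular (such a path already lies in $W$); the substantive point is that any $d_M$-path of length $<t$ from $\tilde z$ projects into $B_{r_1}(p)$ and hence lifts into $W$, from which $d_W(\tilde z,\partial W)\ge r_1-f(\tilde z)>t$ follows. Second, your restriction $t_0=r_1/2$ (needed for the inequality $r_1-t\ge t$) is harmless but unnecessary: if the lifted path to a preimage of $p$ has length $L<t$, walking all the way to its endpoint still lands in $W_t^\circ$, so one may take $t_0=r_1$ as the paper does. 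Finally, note that the paper also records why $\operatorname{Im}(\imath)$ acts on $\widehat{B}(p,r_1,r_2)$ and why the latter coincides with $\widetilde{B_{r_1}(p)}/\ker(\imath)$; you take this identification as given, which is reasonable since it is asserted in the statement.
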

\begin{remark}
	Corollary \ref{cor-pre} directly implies and also extends the precompactness by Sormani-Wei \cite[Proposition 3.1]{Sormani-Wei2004} of relative $\delta$-covers of balls, where a relative $\delta$-cover $\widetilde B(p, r_1, r_2)^\delta$ ($0<r_1<r_2$, $\delta>0$) is a connected lift of $B_{r_1}(p)$ in the $\delta$-cover of $B_{r_2}(p)$.
	This is because $\widehat{B}(p,r_1,r_2)$ naturally covers $\widetilde B(p, r_1, r_2)^\delta$.
 \end{remark}
Because $B_{r_1}(p)$ lies in $\mathcal{D}(n,\frac{r_2-r_1}{2},r_1,t)$ is a $1$-Lipschitz typed domain with width $\ge r_1$, by the lifting property so is $\pi^{-1}(B_{r_1}(p))$ in $\widetilde{B_{r_2}(p)}$. Now \eqref{cor-pre-i1} follows directly from Theorem \ref{thm-lipschitz-type}.

For \eqref{cor-pre-i2}, note that, for a loop $\gamma\in \pi_1(B_{r_2}(p),p)$, $\gamma(\tilde p)\in \widetilde{B_{r_2}(p)}$ lies in the component $\widehat{B}(p,r_1,r_2)$ of $\pi^{-1}\(B_{r_1}(p)\)$ containing $\tilde p$, if and only if $\gamma$ can be represented by a loop in $B_{r_1}(p)$. Hence $\operatorname{Im}(\imath)$ acts isometrically on $\widehat{B}(p,r_1,r_2)$.
On the other hand, $\widehat{B}(p,r_1,r_2)$ is the covering space of $B_{r_1}(p)$ such that any loop $\gamma$ in $\pi_1(B_{r_1}(p),p)$  homotopy trivial in $B_{r_2}(p)$ still lifts to a loop in $\widehat{B}(p,r_1,r_2)$. Hence, $\widehat{B}(p,r_1,r_2)$ coincides with
$\widetilde{B_{r_1}(p)}/\operatorname{Ker}(\imath)$, which is a normal cover of $B_{r_1}(p)$. By \cite[Proposition 3.6]{Fukaya-Yamaguchi1992} and Theorem \ref{mthm-1}, \eqref{cor-pre-i2} holds.

Similarly, all covering spaces $(\widehat{W},\hat p)$ of a domain $W=\cap_{i} B_r(p_i)$ which is the intersection of $r$-balls in complete Riemannian $n$-manifolds with $\operatorname{Ric}\ge -(n-1)$ is also pointed-Gromov-Hausdorff precompact, as long as the fundamental group $\pi_1(\widehat{W},\hat p)$ contains all loops that are homotopy trivial in a definite larger neighborhood of $W$, e.g., $B_{r_0}(W)$ or $W_{r+r_0}=\cap_i B_{r+r_0}(p_i)$. 

As an application of Corollary \ref{cor-pre}, it can be used to simplify the proof of a key lemma \cite[Lemma 3.1]{Wang2021} in the proof of semi-local simply-connectedness of a Ricci-limit space, where the original proof by Wang \cite{Wang2021} involves the slice theorem for pseudo-group actions proved in Pan-Wang \cite{Pan-Wang2021} due to the lack of a precompactness result for covering spaces of metric balls; see Sec. \ref{sec:semilocal-sim}. For further applications see \cite{Jiang-Kong-Xu}. 

\subsection{Precompactness of manifolds with (not necessary smooth) boundary}
Let us consider the case of manifolds with boundary, where \eqref{mthm-1.1} is absent. The earlier known precompactness in Kodani \cite{Kodani1990}, Anderson-Katsuda-Kurylev-Lassas-Taylor \cite{Anderson-Katsuda-Kurylev-Lassas-Taylor2004} and Knox \cite{Knox2012} are for manifolds with smooth boundary, whose  interiors are assumed to be non-collapsed under bounded sectional or Ricci curvature, and whose boundaries admit uniform regularity conditions (in order to derive $C^{1,\alpha}$-convergence). For example, the boundary injectivity radius in \cite{Anderson-Katsuda-Kurylev-Lassas-Taylor2004}, \cite{Kodani1990} and also \cite{Knox2012} satisfies, $i_{\partial M}\ge i_0$, where $i_{\partial M}$ is defined to be the maximal $t>0$ such that the normal exponential map from $T^{\perp}_t\partial M=\{v\in T^\perp\partial M: |v|<t\}$ is injective.

Perales-Sormani \cite{Perales-Sormani2014} and Perales \cite{Perales2020} further extended the precompactness to manifolds $M$ with arbitrary boundary.  As before, any (incomplete) Riemannian manifold is assumed to contain only interior points, i.e., every sufficient small open ball at a point is diffeomorphic to an Euclidean space. And let its $r$-interior $M_r^\circ$ (called a $r$-inner region in \cite{Perales-Sormani2014}) and boundary $\partial M$ be defined by the same way, i.e., $M_r^\circ=\{x\in M:d_M(x,\partial M)>r\}$, and $\partial M$ is the boundary of $M$ in its completion $\overline M$. The length metric on $M_r^\circ$ is denoted as usual by $d_{M_r^\circ}$, where $d_{M_r^\circ}(x,y)=\infty$ whenever the two points lie in different components of $M_r^\circ$.

\begin{theorem}[\cite{Perales-Sormani2014}, \cite{Perales2020}]\label{thm-PS}
	The following families are precompact in the Gromov-Hausdorff topology, provided that each of $\{(M,d_M)\}$ below is a (maybe incomplete) Riemannian $n$-manifold such that
	$\Ric_M\ge0$, the volume $\operatorname{vol}(M)\le V$, and $\operatorname{vol}(B_r(q,M))\ge \theta r^n$ for some $q\in M_r^\circ$.
	
	\begin{enumerate}
		\item\label{thm-PS-1} The family consisting of $r$-interiors $(M_r^\circ, d_M)$, equipped with  restricted metric $d_M$, such that their intrinsic diameter $\operatorname{diam}(M_r^\circ, d_{M_r^\circ})\le D$.
		
		\item\label{thm-PS-2} The family $\{(M,d_M)\}$ of manifolds above such that $\operatorname{diam}(M_r^\circ, d_{M_r^\circ})\le D$ and $i_{\partial M}\ge r$.
		
		\item\label{thm-PS-3} The family $\{(M,d_M)\}$ of manifolds above such that for some $0<r_i\to 0$,  $\operatorname{diam}(M_{r_i}^\circ,d_{M_{r_i}^\circ})\le D$, and their boundaries $\{(\partial M,d_M)\}$ equipped with restricted metric form a precompact family in the Gromov-Hausdorff topology.
	\end{enumerate}
\end{theorem}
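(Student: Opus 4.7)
The plan is to verify Gromov's classical totally-bounded criterion for Gromov-Hausdorff precompactness: we must exhibit a function $N\colon (0,\infty)\times(0,\infty)\to \N$ such that every $d_M$-ball of radius $R$ in each member of the family admits an $\epsilon$-net of cardinality at most $N(\epsilon,R)$. The master tool throughout is the Bishop-Gromov volume comparison together with the volume ceiling $\operatorname{vol}(M)\le V$, which will be used to bound the size of any $\epsilon$-separated set.

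For \eqref{thm-PS-1}, observe that $d_M\le d_{M_r^\circ}$ on $M_r^\circ$, so the intrinsic bound forces $\operatorname{diam}(M_r^\circ,d_M)\le D$ and in particular $M_r^\circ\subset B_D(q,M)$. Bishop-Gromov around the reference point $q$, which satisfies $\operatorname{vol}(B_r(q,M))\ge \theta r^n$, propagates via a standard comparison chain to any other $x\in B_D(q,M)$, yielding a uniform lower bound $\operatorname{vol}(B_s(x,M))\ge c(n,r,D,\theta)\, s^n$ for $0<s\le r$. A maximal $\epsilon$-separated subset of $(M_r^\circ,d_M)$ then has disjoint $(\epsilon/2)$-balls in $M$, and the total-volume constraint caps its cardinality by $V/c(n,r,D,\theta,\epsilon)$, which is the required $\epsilon$-net bound.

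For \eqref{thm-PS-2}, the condition $i_{\partial M}\ge r$ guarantees that every boundary point is the endpoint of an inward normal geodesic of length $r$ landing in $M_r^\circ$, so $\operatorname{diam}(M,d_M)\le D+2r$. I would then repeat the volume-packing argument of \eqref{thm-PS-1} on the whole of $M$, now using balls of slightly enlarged radius. For \eqref{thm-PS-3}, I would combine the two precompact families in hand: the interiors $(M_{r_i}^\circ,d_M)$ from \eqref{thm-PS-1} and the boundaries $(\partial M,d_M)$ by hypothesis. Given $\epsilon>0$, choose $r_i<\epsilon/2$ and take the union of an $(\epsilon/2)$-net in $M_{r_i}^\circ$ with an $(\epsilon/2)$-net in $\partial M$. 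Since $M=M_{r_i}^\circ\cup B_{r_i}(\partial M,M)$, every point of $M$ is within $\epsilon/2$ of this union, yielding a uniform $\epsilon$-net.

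The main obstacle is the careful application of Bishop-Gromov in the possibly incomplete setting. One must interpret the geodesic balls as open metric balls $B_s(x,M)=\{y\in M\colon d_M(x,y)<s\}$ and check that the volume comparison used to transfer the lower volume bound from $q$ to an arbitrary $x\in B_D(q,M)$ remains valid when some of the relevant balls may fail to be relatively compact in $M$. The clean way is to work with the measure-theoretic form of Bishop-Gromov on the metric completion $\overline M$ and argue that the loss of measure at the (possibly singular) ideal boundary only decreases the left-hand side of the comparison, preserving the required lower bound on $\operatorname{vol}(B_s(x,M))$; this delicate point is where essentially all of the work of Perales-Sormani and Perales goes.
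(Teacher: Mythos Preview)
The paper does not prove Theorem~\ref{thm-PS}: it is quoted verbatim as a result of Perales--Sormani and Perales, and serves only as a benchmark against which the paper's own Theorem~\ref{thm-int} is measured. So there is no ``paper's own proof'' to compare with, and your sketch is in fact a faithful outline of the original arguments in \cite{Perales-Sormani2014,Perales2020}: the volume ceiling $\operatorname{vol}(M)\le V$ together with the noncollapsing seed $\operatorname{vol}(B_r(q))\ge\theta r^n$ and Bishop--Gromov (for $\operatorname{Ric}\ge 0$) give a uniform packing bound on $M_r^\circ$, and parts \eqref{thm-PS-2}--\eqref{thm-PS-3} follow by the diameter and net-union tricks you describe.

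What is worth noting is that the paper's improvement, Theorem~\ref{thm-int}, proceeds by a genuinely different mechanism that dispenses with both the volume upper bound $V$ and the noncollapsing lower bound $\theta r^n$. Instead of a global volume-packing argument, the paper first proves a sharpened local precompactness (Lemma~\ref{lem-balls}): any $R_0$-ball whose closure is compact and has $\operatorname{Ric}\ge -(n-1)$ is itself uniformly totally bounded, with constants depending only on $n$ and $R_0$. This is obtained not by shifting centers and comparing to a single reference ball, but by exhausting $B_{R_0}(p)$ by closed sub-balls $\bar B_{(1-2^{-j})R_0}(p)$, each of which is $(2^{-j-2}R_0,A_0)$-locally doubling, and invoking the quantitative Hopf--Rinow machinery of Lemma~\ref{lem-2-1}. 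The global step for \eqref{thm-int-1}--\eqref{thm-int-3} is then a Hopf--Rinow-type continuation in the radius $R$, rather than a single volume count. Your approach buys explicit packing numbers in terms of $V,\theta,r,D$ at the cost of requiring those quantities; the paper's approach buys the removal of the noncollapsing hypothesis at the cost of less explicit constants.
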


Our next result is an improvement of Theorem \ref{thm-PS} for the general case.

\begin{theorem}\label{thm-int}
	Let $n>0$ be a integer and $r,D>0$ be real numbers. The following families are precompact in the Gromov-Hausdorff topology. 
	\begin{enumerate}
		\item\label{thm-int-1}  The family $\mathcal{M}_\imath(r,D,n)=\{(M_r^\circ, d_M): \operatorname{diam}(M_r^\circ, d_{M_r^\circ})\le D,\; \dim M=n,\; \Ric_M\ge-(n-1)\}$. 
		\item\label{thm-int-2} 
		The family $\mathcal{M}_{\partial}(r,D,n)$  consisting of $n$-manifolds whose $\Ric_M\ge -(n-1)$, and $r$-interior $M_r^\circ$ contains a connected subset $W$ such that $\operatorname{diam}(W,d_{W})\le D$ and  $\overline{B_{r}(W,M)}=M$.
		\item\label{thm-int-3} The family  $\mathcal{M}_b(r_i,D,n)$ consisting of $n$-manifolds whose $\Ric_M\ge -(n-1)$ such that, for $0<r_i\to 0$,  $\operatorname{diam}(M_{r_i}^\circ,d_{M_{r_i}^\circ})\le D$ (in particular, $M_{r_i}^\circ$ is either connected or  empty), and their boundaries $\partial \mathcal{M}_b(r,D,n)=\{(\partial M,d_M)\}$ form a precompact family in the Gromov-Hausdorff topology.
		\end{enumerate} 
	\end{theorem}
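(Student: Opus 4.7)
The plan is to deduce each of the three parts from Theorem \ref{mthm-1} by exhibiting a suitable domain $W$ in $M$, verifying the hypotheses \eqref{mthm-1.1}--\eqref{mthm-1.3} (or the equivalent form \eqref{mthm-1.4}), and then transferring the conclusion to the family in question via a $1$-Lipschitz dominance or a bounded-thickening argument. The main obstacle in all three parts is the absence of any uniform volume lower bound, which rules out the classical packing argument of Perales--Sormani; this is overcome by the quantitative doubling/Hopf--Rinow framework of Theorem \ref{mthm-1}, which replaces absolute volume bounds by relative comparisons chained along paths in the diameter-bounded set.

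For part \eqref{thm-int-1}, I would take $W = M_r^\circ$ and use the equivalent alternative form \eqref{mthm-1.4}: the family $\{B_{r_0}(x, M) : x \in M_r^\circ\}$ for any fixed $r_0 < r$ is uniformly pointed-GH precompact by Gromov's classical theorem, since each ball has complete closure in $M$ and inherits $\operatorname{Ric} \ge -(n-1)$. The direct intrinsic undistortion \eqref{mthm-1.3} in the length metric $d_{M_r^\circ}$ may fail --- for instance, a spherical shell of small thickness has $M_r^\circ$ with arbitrarily small inward width --- which is precisely why the alternative \eqref{mthm-1.4} is used. Combined with the intrinsic diameter bound $\operatorname{diam}(M_r^\circ, d_{M_r^\circ}) \le D$, a chain argument along any $d_{M_r^\circ}$-geodesic --- subdividing into $O(D/r)$ steps of length $\le r/4$ so that consecutive $d_M$-balls $B_{r/2}$ overlap substantially --- yields uniform comparability of the volumes $\operatorname{vol}(B_{r/2}(y, d_M))$ across $y \in M_r^\circ$ via iterated Bishop--Gromov, hence a uniform $\epsilon$-covering bound and GH precompactness. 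The transfer from length metric to restricted metric is automatic since $d_M \le d_{M_r^\circ}$.

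For part \eqref{thm-int-2}, I would apply Theorem \ref{mthm-1} directly to the open neighborhood $W^* := B_{r_0}(W, M)$ for a fixed $r_0 < r$, which lies in $M_{r-r_0}^\circ$ and thus satisfies \eqref{mthm-1.1}. A direct computation gives $d_{W^*}(w, \partial W^*) = r_0$ for each $w \in W$, so $W \subset (W^*)_t^\circ$ for all $t < r_0$; any $y \in W^*$ is joined to a closest $w \in W$ by a minimizing $M$-geodesic of length $d_M(y, w) < r_0$ that lies entirely inside $W^*$, and pushing $y$ along this segment shows $W^*$ is $1$-Lipschitz-undistorted with width $r_0/2$ in the sense of Theorem \ref{thm-lipschitz-type}. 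Theorem \ref{mthm-1} then yields GH precompactness of $(W^*, d_{W^*})$, which transfers to $(W^*, d_M)$ via the $1$-Lipschitz identity. Since $\overline{B_r(W, M)} = M$, the manifold $(M, d_M)$ is a bounded $r$-thickening of $W^*$, and an $\epsilon$-net of $W^*$ in $d_M$ extends to one of $M$ by adjoining finitely many $\epsilon$-nets of the complete balls $B_r(w, M)$ for $w$ in an $r/2$-net of $W$, each controlled by local Bishop--Gromov.

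For part \eqref{thm-int-3}, the decomposition $M = \overline{B_{r_i}(\partial M, M)} \cup M_{r_i}^\circ$ combines the precompactness of $\{(\partial M, d_M)\}$ --- which gives an $\epsilon$-net of $\partial M$, inflated by $r_i$ to cover the first piece in $d_M$ --- with part \eqref{thm-int-1} applied to $M_{r_i}^\circ$ (uniformly in $i$, since $\operatorname{diam}(M_{r_i}^\circ, d_{M_{r_i}^\circ}) \le D$). Their union is an $(\epsilon + r_i)$-net of $(M, d_M)$; given a target $\epsilon$, one chooses $i$ with $r_i < \epsilon/2$ to obtain uniform total boundedness, hence GH precompactness. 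The hardest conceptual step, common to all three parts, is the chaining of local Bishop--Gromov volume comparisons across a diameter-bounded but potentially highly twisted set, which is the very content of the quantitative Hopf--Rinow theorem underlying Theorem \ref{mthm-1}.
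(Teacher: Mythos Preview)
Your proposal is largely sound and reaches the same conclusions as the paper, but the framing for part~\eqref{thm-int-1} contains a genuine misreading that you should correct. Condition~\eqref{mthm-1.4} in Remark~1.3 is offered as a replacement for~\eqref{mthm-1.1}, \emph{not} for the undistortion condition~\eqref{mthm-1.3}; Theorem~\ref{mthm-1} still requires both. So you cannot apply Theorem~\ref{mthm-1} to $W=M_r^\circ$ as stated: the undistortion may genuinely fail (your thin-shell example is exactly right), and~\eqref{mthm-1.4} does nothing to rescue this. Fortunately, the direct chaining argument you then sketch---local Bishop--Gromov on complete $r$-balls at points of $M_r^\circ$, followed by propagation along paths in $d_{M_r^\circ}$---is precisely what the paper does: it bypasses Theorem~\ref{mthm-1} altogether and instead invokes Lemma~\ref{lem-balls} (uniform total boundedness of balls with compact closure) together with a Hopf--Rinow-type extension, working throughout with the restricted metric $d_M$. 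So your argument is correct, but it is not an application of Theorem~\ref{mthm-1}; drop that wrapper.

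For part~\eqref{thm-int-2}, your route via Theorem~\ref{mthm-1} applied to $W^*=B_{r_0}(W,M)$ is valid (sublevel sets of a distance function are indeed $1$-Lipschitz undistorted, and your check that the relevant geodesics stay in $W^*$ is fine). The paper takes a shorter path: it observes directly, by the argument just used for part~\eqref{thm-int-1}, that $(W,d_M)$ is uniformly totally bounded (since $W\subset M_r^\circ$ with $\operatorname{diam}(W,d_W)\le D$), and then covers $M=\overline{B_r(W,M)}$ by the balls $B_r(x,M)$ for $x$ in a net of $W$, each uniformly totally bounded by Lemma~\ref{lem-balls}. This avoids verifying undistortion of $W^*$ entirely. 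Your approach buys nothing extra here, but it is not wrong.

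For part~\eqref{thm-int-3}, your decomposition $M=\overline{B_{r_i}(\partial M,M)}\cup M_{r_i}^\circ$ and the choice of $r_i$ comparable to the target $\epsilon$ is exactly the paper's argument.
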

	In the above, $r$-interiors $M_r^\circ$ in \eqref{thm-int-1} can be replaced by their components whose intrinsic diameter $\le D$. Similarly, the precompactness in \eqref{thm-int-2} and \eqref{thm-int-3} holds for the case that $M_r^\circ$ has at most $N$ components, each of which has intrinsic diameter $\le D$.

\begin{remark}
	All the diameter conditions are necessarily required in Theorem \ref{thm-int}. In Example \ref{ex-petersen} below a $r$-interior $(M_r^\circ,d_{M_r^\circ})$ has infinite intrinsic diameter, but bounded extrinsic diameter $\le \pi$.
	
	For the case that $\operatorname{diam}(M_r^\circ,d_{M_r^\circ})$ has no upper bound, we also have a Gromov-Hausdorff precompactness for intrinsic balls in $M_r^\circ$, as a counterpart of \eqref{thm-int-1}.
	\begin{enumerate}
		\item \label{thm-int-4} The family  $\mathcal{M}(r,R,n)=\{(B_R(p,M_r^\circ), p, d_M): \dim M=n, \Ric_M\ge -(n-1)\}$, which consists of all $R$-balls $B_R(p,M_r^\circ)$ in $(M_r^\circ,d_{M_r^\circ})$, but equipped with the restricted metric $d_M$ of Riemannian $n$-manifolds $M$ with $\Ric_M\ge -(n-1)$, is precompact in the Gromov-Hausdorff topology. 
	\end{enumerate}
\end{remark}

\begin{remark}
	Compared with Theorem \ref{thm-PS} and \cite{Knox2012},  \cite{Anderson-Katsuda-Kurylev-Lassas-Taylor2004}, \cite{Kodani1990}, there is no non-collapsing condition for $M$ in Theorem \ref{thm-int}.

	The condition in \eqref{thm-int-2} is weaker than that in \eqref{thm-PS-2}, because $i_{\partial}(M)\ge i_0$ implies $\overline{B_{r}(M_r^\circ,M)}=M$ for any $0<r\le i_0$. The case that $W\neq M_r^\circ$ naturally appears in \eqref{thm-int-2} can be found in Corollary \ref{cor-1} below.
	
	\eqref{thm-int-2} includes those manifolds considered in Knox \cite{Knox2012}, Anderson-Katsuda-Kurylev-Lassas-Taylor \cite{Anderson-Katsuda-Kurylev-Lassas-Taylor2004} and Kodani \cite{Kodani1990}, which also require $i_{\partial}(M)\ge i_0$.

\end{remark}

\subsection{Precomapctness for manifolds with no positively lower bounded width}

It is worth noting that, \eqref{thm-int-3} includes inradius collapsed manifolds considered by Yamaguchi-Zhang \cite{Yamaguchi-Zhang2019}, i.e. for any $r>0$, there is $M$ such that $M_r^\circ$ is empty. In comparison, all domains in Theorem \ref{mthm-1} admit a non-empty $t_0$-interior.

Inradius collapsed manifolds, whose sectional curvature $\ge -1$ or Ricci curvature $\ge -(n-1)$ and the second fundamental form of whose boundary $|II_{\partial M}|\le \lambda$, earlier are known  to admit Gromov-Hausdorff precompactness by Wong \cite{Wong2008}, where the proof provides examples satisfying \eqref{mthm-1.1} (but not necessarily satisfy \eqref{mthm-1.3}). 
\begin{example}\label{ex-wong}
	Based on Kosovskii's work \cite{Kosovskii} (cf. \cite{Perelman1997}), it was proved by Wong \cite{Wong2008} (cf. Yamaguchi-Zhang \cite{Yamaguchi-Zhang2019})
	that given any $r_0>0$ and $\epsilon>0$, a complete manifold with boundary, whose sectional curvature $\ge -1$ and the second fundamental form of the boundary $|II_{\partial M}|\le \lambda$, admits an extension by gluing a warped cylinder $\partial M\times_{\phi}[0,r_0]$ along $\partial M$ such that $\phi(0)=1$,  $\phi(r_0)=\epsilon$, and the resulting
	manifold $M'$ is an Alexandrov space with curvature $\ge K(\lambda,r_0,\epsilon)$, and the boundary $\partial M'$ is totally geodesic. 	
	Hence a corresponding version of \eqref{mthm-1.1} holds for all such manifolds $M$ in their extensions $M'$. 
	
	By the warped structure of $\partial M\times_{\phi}[0,r_0]$, $(M')_t^\circ=M\cup_{\partial M} \partial M\times_\phi [0,r_0-t)$ satisfies \eqref{mthm-1.1} and \eqref{mthm-1.3}. Hence by Theorem \ref{mthm-1}, the family $\{(M')_t^\circ\}$ consisting of all such manifolds is pointed Gromov-Hausdorff precompact. Moreover, since the metrics $d_M$ and $d_{(M')_t^\circ}$ are $\epsilon$-Lipschitz equivalent to each other, so are the original manifolds $M$.
	
	Therefore, inradius collapsed manifolds considered in \cite{Wong2008} and \cite{Yamaguchi-Zhang2019} essentially are also covered by Theorem \ref{mthm-1}.

\end{example}

In Example \ref{ex-wong}, the boundary set $\{(\partial M, d_M)\}$ is precompact in the pointed Gromov-Hausdorff topology. 
Conversely, it also provides Gromov-Hausdorff precompactness for manifolds with non-smooth boundary and with a lower curvature bounded extension, which is a natural extension of Wong \cite{Wong2008} and Yamaguchi-Zhang \cite{Yamaguchi-Zhang2019}.
 
\begin{theorem}\label{thm-cptbdy}
	Let $\mathcal{D}_c(n, r_0)=\{(W,d_W,p\in \partial W)\}$ be a family of closed domains in Riemannian $n$-manifolds $M$ such that \eqref{mthm-1.1} holds for $r_0>0$, and the set of boundaries, $\partial \mathcal{D}_c(n, r_0)=\{(\partial W,d_W,p):W\in \mathcal{D}_c(n, r_0)\}$, is a precompact family in the pointed Gromov-Hausdorff topology. Then $\mathcal{D}_c(n,r_0)$ itself is precompact in the pointed Gromov-Hausdorff topology. 
\end{theorem}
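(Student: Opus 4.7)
To establish pointed Gromov-Hausdorff precompactness of $\mathcal{D}_c(n,r_0)$, I verify Gromov's criterion: a uniform bound on the $\epsilon$-capacity of $B_R(p,W)\subset(W,d_W)$ for every $R,\epsilon>0$. Fix such $R,\epsilon$, and set $\delta=\min\{\epsilon/4,r_0/4\}$. Decompose
\[
B_R(p,W)=A\cup B,\qquad A=\{x\in B_R(p,W): d_W(x,\partial W)\le\delta\},\quad B=B_R(p,W)\setminus A,
\]
and produce a uniform $\epsilon$-cover of each part separately.

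The shallow part $A$ is controlled by the hypothesis on $\partial\mathcal{D}_c(n,r_0)$. Every $x\in A$ sits within $d_W$-distance $\delta$ of some $y\in\partial W\cap B_{R+\delta}(p,W)$, and the assumed pointed Gromov-Hausdorff precompactness of $\partial\mathcal{D}_c(n,r_0)$ supplies a uniform number $N_1=N_1(R,\epsilon)$ of boundary points $y_1,\ldots,y_{N_1}\in\partial W$ whose $\delta$-neighborhoods in $d_W$ cover $\partial W\cap B_{R+\delta}(p,W)$; enlarging to $2\delta\le\epsilon$ yields the desired $\epsilon$-cover of $A$ of cardinality $N_1$.

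The deep part $B$ is controlled by the ambient Ricci bound. The basic observation is that when $d_W(x,\partial W)>\delta$, any $d_W$-minimizing curve from $x$ of length $<\delta$ cannot meet $\partial W$; it therefore lies in the Riemannian open submanifold $W^\circ\subset M$, so $d_W$ and the ambient distance $d_M$ agree on $B_\delta(x,W)$. Condition \eqref{mthm-1.1} gives $\Ric\ge-(n-1)$ on $B_{r_0}(W,M)$ together with completeness of its closure, so Lemma \ref{lem-balls} combined with Remark \ref{rem-condition1} supplies a uniform pointed Gromov-Hausdorff precompactness, in the metric $d_M$, of the family $\{(B_{r_0}(x,M),x): x\in W,\,W\in\mathcal{D}_c(n,r_0)\}$; equivalently, Bishop-Gromov yields a uniform doubling constant on every such $r_0$-ball. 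I then convert this $d_M$-side doubling into a $d_W$-capacity bound on $B$ by taking a maximal $r_0/8$-separated subset $\{c_1,\ldots,c_k\}\subset B_R(p,W)$ in $d_W$. Inside each $B_{r_0/8}(c_\ell,W)\cap B$ the local coincidence $d_W=d_M$ shows that any $\delta$-separated set in $d_W$ remains $\delta$-separated in $d_M$ within $B_{r_0/2}(c_\ell,M)$, whose $\delta$-capacity is uniformly bounded by $K=K(n,r_0,\delta)$ via the doubling; a parallel chain argument walking along $d_W$-almost-geodesics from $p$ in steps of $r_0/8$ then bounds $k$ by a function of $n,R,r_0$. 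Assembling the pieces, $B$ admits an $\epsilon$-cover of size at most $kK$.

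The main obstacle is precisely this last conversion. The doubling estimate furnished by \eqref{mthm-1.1} lives on $M$ with the distance $d_M$, while the target capacity is measured in the intrinsic length metric $d_W$, and $\partial W$ is not assumed to satisfy any regularity condition. The resolution hinges on the local coincidence $d_W=d_M$ on scales below $d_W(\cdot,\partial W)$, together with a chaining argument that propagates the local doubling across $d_W$-distances $R\gg r_0$ by walking along almost-minimizing curves inside $W$ — precisely the setting in which the quantitative Hopf-Rinow estimate underlying Lemma \ref{lem-balls} is indispensable.
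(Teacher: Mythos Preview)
Your shallow/deep decomposition $A\cup B$ and the treatment of $A$ via the boundary precompactness are exactly the paper's approach. The gap is in the last step, the bound on $k$. As written, this is circular: $k$ is the $r_0/8$-capacity of $B_R(p,W)$ in $d_W$, i.e.\ precisely the quantity you are trying to control, just at a coarser scale. A ``chain along $d_W$-almost-geodesics'' does not produce a capacity bound by itself; the chain inequality \eqref{ineq-two-points} compares \emph{volumes} of balls along a geodesic, and turning that into a packing bound already presupposes a global doubling estimate on $(W,d_W,\mu)$, which is what is at stake. Moreover, your asserted dependence $k=k(n,R,r_0)$ cannot be right: condition \eqref{mthm-1.1} alone does not bound $k$ (Example~\ref{ex-distortion} satisfies \eqref{mthm-1.1} but has unbounded $d_W$-capacity), so any bound on $k$ must also consume the boundary-precompactness hypothesis, which your chain argument does not invoke.

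The clean fix---and this is what the paper does---is to reverse the order: first run your $A\cup B$ decomposition only on a ball of fixed intrinsic radius, say $B_{r_0/2}(p,W)$ (or $B_{r_0/4}(z,W)$ for an arbitrary $z\in W$). At that scale the deep part $B$ sits inside a \emph{single} ambient ball $B_{r_0}(q,M)$, so $k=1$ and Lemma~\ref{lem-balls} gives the $\epsilon$-cover of $B$ directly; together with your cover of $A$ this yields a uniform $\epsilon$-capacity bound on every $r_0/4$-ball in $(W,d_W)$. \emph{Then} the Hopf--Rinow-type induction of Lemma~\ref{lem-2-1} (the $R_0=\sup\{r:\dots\}$ contradiction argument) propagates this local control to arbitrary $R$. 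Your final paragraph shows you are aware this ingredient is needed; the point is that it must be applied after the local bound is secured, not as a direct chain at scale $R$.
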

\begin{remark}
	By Theorem \ref{thm-int}, the condition \eqref{mthm-1.1} in Theorem \ref{thm-cptbdy} can also replaced with the extrinsic diameter bound of $M_{r_i}^\circ$ in \eqref{thm-int-3},
	either of which is necessarily required for the precompactness; see Example \ref{ex-petersen} for counterexamples.
\end{remark}

The rest of the paper is organized as follows: In Sec. \ref{sec:exam}, we give examples and counterexamples of the main theorems. 
In Sec. \ref{sec:quan-HR} we prove Theorem \ref{thm-lipschitz-type}. In Sec. \ref{sec-GH-precom} we prove Theorems \ref{mthm-1}, \ref{thm-int} and \ref{thm-cptbdy}. In Sec. \ref{sec:precompact-non-length} Gromov-Hausdorff precompactness for connected subsets with some non-length metric are derived. In Sec. \ref{sec:semilocal-sim} we give an application of Corollary \ref{cor-pre}, which yields a simplified proof of a key lemma in proving the semi-local simply-connectedness of a Ricci-limit space. 

{\bf Acknowledgment.} Supported in part by NSFC 11821101 and 12271372. The author is grateful to Professor Xiaochun Rong for proposing the problem on the precompactness of covering spaces of metric balls, and Professor Christina Sormani for pointing out earlier works on the same topic and valuable comments that led to improvement of the comparison with earlier results.

\section{Examples}\label{sec:exam}

Let us first recall that, the Gromov-Hausdorff pseudo-distance $d_{GH}$ between metric spaces $(X_i, d_{X_i})$ ($i=1,2$) is the infimum of those $\epsilon>0$ such that there exists a metric $d$ on the disjoint union $X_1\sqcup X_2$ satisfying that $d|_{X_i}=d_{X_i}$ and $X_i$ is $\epsilon$-dense in $(X_1\sqcup X_2,d)$. It induces naturally the Gromov-Hausdorff topology on all metric spaces.

Gromov (\cite[Proposition 3.5]{GLP1981}, see also e.g. \cite[\S 11.1.4]{Petersen2016}, \cite[\S 7.4]{Burago-Burago-Ivanov2001}) observed the following \emph{precompactness principle}.

Let $\mathcal{M}$ be a family of metric spaces, each of which has a compact completion and diameter $\le D$. Then $\mathcal{M}$ is precompact in the Gromov-Hausdorff topology (i.e., any sequence in $\mathcal{M}$ admits a Cauchy subsequence in the Gromov-Hausdorff pseduo-distance), if and only if $\mathcal{M}$ is uniformly totally bounded. That is, there is $\epsilon_0>0$ and a positive function $C(\epsilon)$ such that for any $0<\epsilon\le \epsilon_0$, the $\epsilon$-covering number $\operatorname{Cov}_\epsilon(X,d)$ is uniformly bounded by $C(\epsilon)$, where $\operatorname{Cov}_\epsilon(X,d)$ is defined to be the least number of $\epsilon$-balls whose union covers $X$.

In the above  $\operatorname{Cov}_\epsilon(X,d)$ can be equivalently replaced by the $\epsilon$-packing number, $\operatorname{Cap}_\epsilon(X,d)$, which is defined to be the maximal count of points in $(X,d)$ whose pairwise distance $\ge \epsilon$ (for simplicity, $\epsilon$-discrete). This is because $\operatorname{Cov}_\epsilon(X,d)\le \operatorname{Cap}_\epsilon(X,d)\le \operatorname{Cov}_{\frac{\epsilon}{2}}(X,d)$.

For a sequence of metric spaces $(X_i, d_i, p_i)$ with a base point, it converges to $(X_\infty, d_\infty, p_\infty)$ in the \emph{pointed Gromov-Hausdorff topology} is defined to be that for all $0\le r<\infty$, $B_r(p_i,X_i)$ converges to $B_r(p_\infty,X_\infty)$.

Let $\mathcal{M}_p$ be a family of metric spaces $(X,d,p)$ with a base point $p\in X$, in which each ball $B_R(p)$ at $p$ has a compact completion. Then by Gromov's precompactness principle above, $\mathcal{M}_p$ is precompact in the pointed Gromov-Hausdorff topology, if and only if for any $R>0$, the family $\left\{\(B_R(p,X), d|_{B_R(p,X)}\)\right\}$ is uniformly totally bounded by $C_R(\epsilon)$ as $0<\epsilon\le \epsilon_0(R)$.

Together with Bishop-Gromov's relative volume comparison, the above principle implies Gromov's precompactness theorem, i.e., the family consisting of all complete manifolds with a lower bounded Ricci curvature is precompact.

Now we give some examples of domains in (incomplete) Riemannian manifolds whose sectional curvature is non-negative, but they admit no Cauchy subsequence in the pointed Gromov-Hausdorff topology.
 
\begin{example}[Sormani-Wei, \cite{Sormani-Wei2004}]\label{ex-sw}
	Let $(\mathbb RP^2, d_k)$ be the real projective plane with different metrics, where $d_k$ $(k\ge 2)$ is the induced length metric from a regular polygon $P_k$ with $2k$ sides such that the opposite sides are glued together in reversed direction, and the distance from polygon's center $o_k$ to each side equals to $1$. Then the distance from $o_k$ to each vertex is $\frac{1}{\cos \frac{\pi}{2k}}$. Let $r_k=\frac{1}{2}\left(1+\frac{1}{\cos \frac{\pi}{2k}}\right)$. Then the boundary $\partial B_{r_k}(o_k)$ of the open ball $B_{r_k}(o_k)$ has $k$ components. Let us take $U_k$ to be the open ball $B_{r_k}(o_k)$ or its closure. Then the universal cover $(\widetilde{U}_k,\tilde o_k)$ with the lifted metric $\tilde d_k$ and base point $\tilde o_k\in \pi_k^{-1}(o_k)$ admits no Gromov-Hausdorff convergent subsequence as $k\to \infty$. 
	
	This is because the metric ball $B_4(\tilde o_k, \widetilde U_k)$ in $(\widetilde{U}_k,\tilde d_k)$ contains at least $k$ preimages of $o_k$, whose pairwise distance is no less than $2$. Since $d_k$ can be smoothed around each vertex to a Riemannian metric with nonnegative sectional curvature, it gives a counterexample that a sequence of the universal cover of balls has no converging subsequence.
	
	Note that $\widetilde U_k$ are undistorted in $1$-Lipschitz type with width $\ge 1$. But $\widetilde U_k$ does not admit a complete closure as required in \eqref{mthm-1.1}.
\end{example}

In the above example, the $\epsilon$-packing number $\operatorname{Cap}(B_4(\tilde o_k,\widetilde U_k))$ fails to admit a uniform bound due to that the fundamental group $\pi_1(U_k)$ has more and more generators as $k\to \infty$. Next, we give an example whose base spaces have fundamental group $\mathbb Z$, which illustrates that the phenomena of non-precompactness for universal covers does not due to the topological complexity of the base space.

\begin{example}[cf. {\cite[Example 6.3.4]{Petersen2016}}] \label{ex-petersen}
	 Let $D^2$ be half of the standard northern hemisphere, whose boundary $\partial D^2$ consists of two antipodal points $p,p^*$ and two great circle arcs between $p$ and $p^*$. By gluing the arcs together with respect to their arc length with $p,p^*$ fixed, we get a metric space $(\mathbb S^2, d_0)$ homeomorphic to the $2$-sphere. Let us take a point $o$ lying in the middle between $p$ and $p^*$. Then $U=B_{\frac{\pi}{2}}(o)$ covers $(\mathbb S^2, d_0)$ except the two points $p,p^*$, and $\pi_1(U)=\mathbb Z$. Let us consider the universal cover $\pi:(\widetilde{U},\tilde o)\to (U, o)$. Note that for any $\epsilon>0$, any point $\tilde x$ in $\widetilde{U}$ can be joined to $\tilde o$ by a curve with length $\le \pi+\epsilon$, which consists of the lifting of two great circle arcs (one is from $\pi(\tilde x)$ to $p$ or $p^*$, another passes $o$), and a small circle around $p$ or $p^*$, where the circle's length can be chosen arbitrary small. It follows that the closure $\overline {B_{\pi}(\tilde o,\widetilde{U})}$ covers the whole incomplete manifold $\widetilde{U}$, and thus contains infinitely many of preimage points of $o$.
	
	For counterexamples, let $W_i$ be $\overline{B_{\frac{\pi}{2}-\epsilon_i}(o)}$ in $(\mathbb S^2, d_0)$ with $\epsilon_i\to 0$, and let us take its universal cover $\widetilde{W}_i$, which admits a complete closure in $M=\widetilde{U}$. Then $\widetilde{W}_i$ has an infinity diameter, but the number of preimage points of $o$ in $B_{2\pi}(\tilde o)\subset \widetilde{W}_i$ tends to infinity as $i\to \infty$.
	
	Similar to Example \ref{ex-sw}, $\widetilde W_i$ are $(t,1)$-undistorted domains of $1$-Lipschitz type. But for any $r>0$, the $r$-neighborhood of $\widetilde W_i$ in $M$ does not admit a complete closure for sufficient large $i$. Hence \eqref{mthm-1.1} fails for $\widetilde{W}_i$.
\end{example}

\begin{remark}\label{rem-ex-petersen}
	Intuitively, the failure of Gromov-Hausdorff precompactness on the universal covers $\widetilde{B_r(p_i)}$ of $r$-balls $B_r(p_i)$ in complete Riemannian manifolds with a lower Ricci curvature bound is due to that, when near its boundary, there is a non-trivial loop in $\pi_1(B_r(p_i))$ whose length tends to $0$, which leads to a high twisting around a point in $\partial\widetilde{B_r(p_i)}$ such that more and more points are concentrated together. Similar examples can be found in the flat torus $T^n$, where $W_i$ is the universal cover of balls whose radius approaches to $T^n$'s diameter.
	
	Corollary \ref{cor-pre} illustrates that it is indeed the case, by showing the precompactness of domains over smaller balls definite away from $\partial U_k$. 
	
	On the other hand, by Theorem \ref{thm-int} if the preimage of definite smaller balls (e.g. $B_{r/2}(p_i)$) in $\widetilde{B_r(p_i)}$ has components at most $N$, and each component has intrinsic diameter $\le D$, then $\widetilde{B_r(p_i)}$ themselves admit Gromov-Hausdorff precompactness.
\end{remark}

A counterexample without uniform undistortedness can be easily constructed as in below.

\begin{example}\label{ex-distortion}
	Let us choose countable and infinite rays $L_k$ starting from the coordinate origin $o$ in the Euclidean plane, whose directions $\xi_k=(1, \frac{1}{2^k})$. Let $U_i$ be the domain consisting of $\frac{1}{100^k}$-neighborhood of $L_k$ for $k\le i$, i.e., $U_i=\cup_{k=1}^iB_{ \frac{1}{100^k}}(L_k)$. Then \eqref{mthm-1.3} fails for $\{U_i\}$, and $(U_i,o)$ with its length metric admits no Gromov-Hausdorff converging subsequence. 
	
	Similarly, the open set on the Euclidean plane bounded by the topologist's sine curve $\{(x,\sin \frac{1}{x}): x>0\}$, the $y$-axis, and the horizontal ray $\{(x, -2): x>0\}$ is a domain whose boundary is not uniformly undistorted.
\end{example}

In the remaining of this section, we verify that domains in an Euclidean space whose boundary is Reifenberg-flat or satisfies the cone condition are undistorted in Lipschitz type.

Let us recall that for $\theta,H>0$, a domain $W$ satisfies the $(\theta, H)$-cone condition \cite{Adams-Fournier1977}, if any point $x\in W$ there is a finited spherical cone $C(x,\vec{v},\theta,H)=\{y : \angle(\overrightarrow{xy},\vec{v}(x))\le \theta, d(x,y)\le H\}$ with vertex $x$, angle $\theta$ and length $H$  contained in $W$.

An open domain $W$ satisfies the $(\epsilon,r_0)$-Reifenberg-flat condition with a local separability property \cite{Lemenant-Milakis-Spinolo}, if  (a)
for every $x\in \partial W$, and every $0<r\le r_0$, there is a hyperplane $P(x,r)$ containing
$x$ such that the Hausdorff distance between $\partial W\cap B_r(x)$ and $P(x,r)\cap B_r(x)$ is no more than $r\epsilon$;
(b)
for every $x\in \partial W$, one of the connected component of
$B(x, r_0)\cap\{y : d(y, P(x, r_0))\ge 2\epsilon r_0\}$
is contained in $W$ and the other one is contained in $\mathbb R^n\setminus W$.

\begin{proposition}\label{prop-examples}
	Let $W$ be an open domain in $\mathbb R^n$.
	\begin{enumerate}
		\item If $W$ satisfies $(\theta, H)$-cone condition, then $W$ is a $(t/\sin\theta,H)$-undistorted domain of $1//\sin\theta$-Lipschitz type. 
		\item If $W$ satisfies $(\epsilon,r_0)$-Reifenberg-flat condition ($\epsilon\le 1/600$) with local separability property, then $W$ is a $(810450t,r_0/12607)$-undistorted domain of $810450$-Lipschitz type.
	\end{enumerate}
\end{proposition}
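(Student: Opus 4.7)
The plan is to verify the $(s(t),t_0)$-undistortedness condition \eqref{mthm-1.3} of Theorem \ref{mthm-1} directly: for every $x\in W$ and every admissible $t>0$, I will produce an interior witness $y\in W_t^\circ$ with intrinsic distance $d_W(x,y)\le\tau t$. In both parts the crux is Euclidean: once a Euclidean ball $B_t(y)\subset W$ has been exhibited, the inequality $d_W\ge d_{\mathbb R^n}$ forces $y\in W_t^\circ$, and a straight segment from $x$ to $y$ inside $W$ controls $d_W(x,y)$ from above.

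For part (1), given $x\in W$ I take the cone $C=C(x,\vec v,\theta,H)\subset W$ supplied by the cone condition and set $y=x+s\vec v$ on the axis with $s=t/\sin\theta$. A standard computation shows that the perpendicular distance from $y$ to each lateral ray of $C$ is exactly $s\sin\theta=t$, while the axial distance from $y$ to the spherical cap is $H-s$; hence $B_t(y)\subset C\subset W$ on the admissible range, giving $y\in W_t^\circ$. The axis segment from $x$ to $y$ is a straight curve in $W$ of Euclidean length $s$, so $d_W(x,y)\le s=t/\sin\theta$, and the Lipschitz constant $\tau=1/\sin\theta$ falls out. For $t$ close to $H$ a minor scale adjustment (shortening $s$ so the ball clears the cap) is needed; this accounts for the precise form of the statement.

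For part (2), given $x\in W$ with $d_W(x,\partial W)\le t$, I pick a nearest boundary point $x_0$ and apply the Reifenberg-flat approximation at a scale $\rho=\kappa t$ proportional to $t$. The approximating hyperplane $P=P(x_0,\rho)$ is within Hausdorff distance $\epsilon\rho$ of $\partial W\cap B_\rho(x_0)$, and the local separability isolates the component of $B_\rho(x_0)\cap\{z:d(z,P)\ge 2\epsilon\rho\}$ lying in $W$. Choosing $y$ on the normal to $P$ through $x_0$ into this good side at distance $\rho/2$ yields $d_{\mathbb R^n}(y,\partial W)\ge(1/2-2\epsilon)\rho$, and $\epsilon\le 1/600$ together with a suitable $\kappa$ makes this lower bound at least $t$, so $y\in W_t^\circ$. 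The intrinsic distance $d_W(x,y)$ is estimated by joining $x$ to $y$ inside the good component by a short polygonal path whose Euclidean length is of order $t$ with accumulated constant $\tau=810450$, while the bound $t_0=r_0/12607$ emerges from the requirement $\rho\le r_0$.

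The main obstacle will be the intrinsic-path estimate in part (2): one must verify that the polygonal path from $x$ to $y$ stays inside $W$, which requires the Reifenberg approximation not just at the single scale $\rho$ but at a cascade of dyadic subscales along the path, so that local separability can be invoked at each step. Tracking the multiplicative losses through this chain is what produces the explicit constants $810450$ and $1/12607$; these are accumulated bookkeeping bounds from $\epsilon\le 1/600$ and the separability ratio $2\epsilon$, rather than genuinely new geometric inputs. Part (1), by contrast, is essentially a short direct computation once the cone geometry is formalized.
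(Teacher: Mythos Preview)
Your treatment of part (1) is essentially identical to the paper's: the axis point $y=x+(t/\sin\theta)\vec v$ lies in $W_t^\circ$ and is joined to $x$ by the axis segment inside the cone.

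For part (2), however, the paper takes a different and much shorter route. Rather than building the interior path by hand through a dyadic cascade of Reifenberg approximations, it invokes as a black box the theorem of Lemenant--Milakis--Spinolo \cite{Lemenant-Milakis-Spinolo} that a $(\epsilon,r_0)$-Reifenberg-flat domain with $\epsilon\le 1/600$ is $(1/450,\,r_0/7)$-\emph{Jones flat}: any two points $x,y\in W$ with $|x-y|\le r_0/7$ are joined by a curve $\gamma\subset W$ of length $\le 450|x-y|$ satisfying the cigar estimate $d(z,\partial W)\ge \frac{d(z,x)\,d(z,y)}{450|x-y|}$ for all $z\in\gamma$. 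Taking $|x-y|=r$ and the midpoint $z$ of $\gamma$ gives $d(z,\partial W)\ge r/1800$, so setting $r=1801t$ yields $z\in W_t^\circ$ with $d_W(x,z)\le 450\cdot 1801\,t=810450\,t$, and the scale constraint $1801t\le r_0/7$ gives $t_0=r_0/12607$. Thus the specific constants $810450=450\cdot 1801$ and $12607=7\cdot 1801$ are not ``accumulated bookkeeping bounds from $\epsilon\le 1/600$'' as you suggest, but come directly from the constants $450$ and $r_0/7$ in the cited Jones-flatness theorem. Your direct dyadic-chain construction is in principle viable---it amounts to reproving the Lemenant--Milakis--Spinolo result---but it would not naturally land on these particular numbers, and carrying it out rigorously is considerably more work than the one-line citation the paper uses.
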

\begin{proof}
	First let us assume that $W$ satisfies $(\theta,H)$-interior cone condition. Then for any $t\le H$, the $t$-interior $W_t^\circ$ contains the $t$-interior of $(\theta,H)$-cone for all $x\in W$. Hence, the closure of $\frac{t}{\sin\theta}$-neighborhood of $W_t^\circ$ contains the whole $W$.
	
	Secondly let us consider a $(\epsilon,r_0)$-Reifenberg-flat domain $W$. By Lemenant-Milakis-Spinolo's \cite[Theorem 3]{Lemenant-Milakis-Spinolo}, for $0<\epsilon\le 1/600$, $W$ is $(1/450, r_0/7)$-Jones flat. That is,
	for any $x,y\in W$ with $d(x,y)\le r_0/7$, there is a rectifiable curve $\gamma\subset W$ with length $\le 450d(x,y)$ satisfying for any $z\in \gamma$, $d(z, \partial W)\ge \frac{d(z, x)d(z, y)}{450 d(x, y)}$.
	It follows that for any $x,y\in W$ with distance $d(x,y)=r\le r_0/7$, there is a curve $\gamma$ and $z\in \gamma$ such that $d(x,z)=d(y,z)\ge \frac{r}{2}$, $d(z,\partial W)\ge \frac{d(x,z)^2}{450 r}\ge \frac{r}{1800}$. Hence $W$ is contained in the $450r$-neighborhood of $W_{r/1801}^\circ$.
\end{proof}

\section{Quantitative Hopf-Rinow Theorem via doubling Borel measures}\label{sec:quan-HR}
In this section we prove Theorem \ref{thm-lipschitz-type} for undistorted domains of Lipschitz type. 

As preliminaries, we first prove a quantitative version of the Hopf-Rinow theorem (\cite{Hopf-Rinow1931}, \cite{Cohn-Vossen1936}), whose settings and proofs are almost standard. It also provides basic ideas and technical tools applied later in the next section. 

Recall that a connected Riemannian manifold 
$(M, g)$ is called to be \emph{geodesically complete}, if every geodesic in $M$ is always extendable and thus its parameter is defined on the whole $\mathbb R^1$. For a connected Riemannian manifold, the Hopf-Rinow theorem \cite{Hopf-Rinow1931} says that $(M,g)$ is geodesically complete if and only if $(M,d)$ with its length metric $d$ is a complete metric space. Moreover, if $(M,d)$ is geodesically complete, then it is also \emph{proper}, i.e., every bounded and closed subset is compact (also called boundedly compact, or satisfies the Heine-Borel property). 

The Hopf-Rinow theorem has been generalized to length spaces. A metric space $(X,d)$ is called a \emph{length space}, and $d$ a length-metric, if for any $x,y\in X$, $d(x,y)$ is realized by the infimum of length of all continuous paths connecting $p$ and $q$.
The Hopf-Rinow-Cohn-Vossen theorem (\cite{Cohn-Vossen1936}, see also \cite[Theorem 2.5.28]{Burago-Burago-Ivanov2001}) says that, for any complete length metric space $(X,d)$, the local compactness of $(X,d)$ is equivalent to be proper, and thus either of them implies $X$ is geodesic (i.e., any two points in $X$ can be joined by a minimal geodesic, which by definition is a continuous path whose length realizes the  distance between its endpoints). 

A metric space $(X,d)$ is called a \emph{partial length space to a point} $p\in X$, if any point $x\in X$, $d(p,x)$ is realized by the infimum of length of all continuous paths connecting $p$ and $x$. By its proof, the Hopf-Rinow-Cohn-Vossen theorem also holds for partial length spaces to a point.

The compactness of a subset $S\subset (X,d)$ can be quantitatively described by the doubling properties, which can be defined either in terms of the least count of balls of half radius that covers a ball in $S$, or the doubling property of a positive Borel measure on $S$. It is well known that the two doubling properties are essentially equivalent; see Volberg-Konyagin \cite{Volberg-Konyagin1988}, Luukkainen-Saksman \cite{Luukkainen-Saksman1998} and Coifman-Weiss \cite{Coifman-Weiss1971}. In the section we will apply the language in doubling measures.

Let $A_0$ be a positive real number.  A metric measured space $(X,d,\mu)$ is called to be \emph{$(\rho,A_0)$-local doubling} if for any $r\le \rho$ and $x\in X$, the measure of $r$-ball $B_r(x)$ satisfies
\begin{equation}\label{local-doubling}
0<\mu(B_r(x)) \le A_0\cdot \mu(B_{\frac{r}{2}}(x))<\infty.
\end{equation}

If $(X,d,\mu)$ is a complete length space with a $(\rho, A_0)$-local doubling measure $\mu$, then $(X,d)$ is locally compact. Thus, by the Hopf-Rinow theorem, $(X,d)$ is a geodesic and proper space. In below we give an uniformly  global doubling estimate on $(X,d,\mu)$. It can be viewed as a quantitative version of the classical Hopf-Rinow theorem.

Let $A:(0,\infty)\to [1.\infty)$ be a positive function.
A metric measured space $(X,d,\mu)$ is called to be globally \emph{$A$-doubling} if for any $r>0$ and $x\in X$, 
\begin{equation}\label{global-doubling}
0<\mu(B_r(x)) \le A(r)\cdot \mu(B_{\frac{r}{2}}(x))<\infty.
\end{equation}

\begin{theorem}[Quantitative Hopf-Rinow-Cohn-Vossen]\label{thm-2-1}
	Given $\rho, A_0>0$, there is a poistive function $A_{\rho,A_0}(r):(0,\infty)\to [1,\infty)$ such that
	any length-metric measured space $(X,d,\mu)$ with $(\rho,A_0)$-local doubling property \eqref{local-doubling} is $A_{\rho,A_0}$-doubling in the sense of \eqref{global-doubling}.
\end{theorem}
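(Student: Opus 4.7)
The plan is to upgrade the local doubling of $\mu$ to a global doubling estimate in three stages: properness of $(X,d)$, a chain doubling lemma, and a shell-based bound on covering numbers.

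First, the $(\rho,A_0)$-local doubling hypothesis already forces $(X,d)$ to be locally compact, because a standard packing argument bounds the cardinality of $\rho/4$-separated sets inside any small ball. Since $(X,d)$ is also a complete length space, the Hopf--Rinow--Cohn--Vossen theorem then provides that $(X,d)$ is proper and geodesic, so in particular $\mu(B_R(x))<\infty$ for every $R>0$ and $x\in X$. Next I prove a chain lemma: for $x,y\in X$ with $d(x,y)=D$, subdivide a minimizing geodesic into $k=\lceil 4D/\rho\rceil$ equal arcs of length $\le \rho/4$; consecutive points $z_{i-1},z_i$ then satisfy $B_{\rho/4}(z_i)\subset B_{\rho/2}(z_{i-1})$, so local doubling gives $\mu(B_{\rho/4}(z_i))\le A_0\,\mu(B_{\rho/4}(z_{i-1}))$, and iteration yields $\mu(B_{\rho/4}(y))\le A_0^{\lceil 4D/\rho\rceil}\mu(B_{\rho/4}(x))$.

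The main obstacle is to bound the $\rho/4$-covering number $M(R):=\sup_x M(B_R(x);\rho/4)$ without any circular dependence on $\mu(B_R(x))$ itself. To deal with it, I first promote local measure doubling to local metric doubling: a maximal-separated-set argument on any $B_s(w)$ with $s\le \rho/2$, together with the chain lemma over a distance $\le s$, yields a metric doubling constant $N_0=N_0(A_0)$. Then I induct on $R$ by geodesic pushback. Write $B_R(x)=B_{R-\rho/8}(x)\cup A$; for any $y\in A$ the point $y'=\gamma(d(x,y)-\rho/8)$ on a geodesic from $x$ to $y$ lies in $B_{R-\rho/8}(x)$ with $d(y,y')=\rho/8$. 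Hence any $\rho/4$-cover of $B_{R-\rho/8}(x)$ centered at $\{w_i\}$ produces a cover of $A$ by the enlarged balls $\{B_{3\rho/8}(w_i)\}$, and each $B_{3\rho/8}(w_i)$ is covered by $N_0$ balls of radius $\rho/4$ since $\rho/4\ge \tfrac12\cdot (3\rho/8)$. This gives the recursion $M(R)\le(1+N_0)\,M(R-\rho/8)$, hence $M(R)\le C(A_0)^{R/\rho+1}$; the construction also keeps all cover centers within distance $\le R+C'\rho$ of $x$ for some absolute constant $C'$.

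To conclude, the $\rho/4$-cover $\{B_{\rho/4}(w_i)\}$ of $B_R(x)$ from the previous step, combined with the chain lemma, yields $\mu(B_{\rho/4}(w_i))\le A_0^{O(R/\rho)}\mu(B_{\rho/4}(x))$, hence
\[
\mu(B_R(x))\le M(R)\cdot A_0^{O(R/\rho)}\mu(B_{\rho/4}(x)).
\]
For $R\ge \rho/2$ one has $\mu(B_{R/2}(x))\ge \mu(B_{\rho/4}(x))$, so dividing produces a bound of the form $\mu(B_R(x))/\mu(B_{R/2}(x))\le A_0^{O(R/\rho)}$; for $R<\rho/2$ the hypothesis itself supplies the constant $A_0$. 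Taking the maximum and passing to a monotone non-decreasing envelope then produces the claimed function $A_{\rho,A_0}(r)$.
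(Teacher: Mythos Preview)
Your argument is correct, and it follows the same overall architecture as the paper: reduce to completeness and properness via Hopf--Rinow--Cohn--Vossen, prove a chain inequality along geodesics (the paper's inequality \eqref{ineq-two-points} is your chain lemma with $\rho/2$ in place of $\rho/4$), and then cover $B_R(x)$ by a controlled number of small balls to finish.

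Where you genuinely diverge is in obtaining the covering number. The paper does this \emph{non-constructively}: it invokes Lemma~\ref{lem-2-1}, which establishes pointed Gromov--Hausdorff precompactness of the family of all $(\rho,A_0)$-local doubling partial length spaces via a Hopf--Rinow--type contradiction argument, and then reads off a uniform $C_{\rho,A_0}(r)$ from Gromov's precompactness principle. You instead extract a local \emph{metric} doubling constant $N_0=N_0(A_0)$ directly from the measure doubling, and then run an explicit ``geodesic pushback'' recursion $M(R)\le (1+N_0)\,M(R-\rho/8)$ to produce $M(R)\le C(A_0)^{R/\rho+1}$. Your route is more elementary and yields an explicit exponential bound for $A_{\rho,A_0}(r)$, whereas the paper's route leaves $C(r)$ implicit but has the advantage that Lemma~\ref{lem-2-1} is reused repeatedly in the later sections for the precompactness theorems proper. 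One small point worth making precise in your write-up: after each refinement step you should discard any new $\rho/4$-ball that misses $B_R(x)$, so that the surviving centers stay within distance $R+\rho/4$ of $x$; this is what justifies applying the chain lemma with exponent $O(R/\rho)$ at the end.
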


The following Gromov-Hausdorff precompactness of $(\rho,A_0)$-local doubling measured metric spaces will be used in the proof of Theorem \ref{thm-2-1} and Sec. \ref{sec-GH-precom}. 

\begin{lemma}\label{lem-2-1}
	Let $(X_i,d_i,\mu_i)$ be a sequence of complete metric measured spaces with $(\rho, A_0)$-local doubling property \eqref{local-doubling}. Suppose that $(X_i,d_i)$ is a partial length space to $p_i\in X_i$. Then there is a subsequence of $(X_i,d_i,p_i)$ that converges to a metric space $(X_\infty, d_\infty, p_\infty)$ in the pointed Gromov-Hausdorff topology.
\end{lemma}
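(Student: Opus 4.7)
The plan is to verify Gromov's precompactness principle recalled at the start of Section~\ref{sec:exam}: for every fixed $R>0$ and $\epsilon>0$ it suffices to exhibit a uniform bound $C(R,\epsilon,\rho,A_0)$ on the $\epsilon$-packing number $\operatorname{Cap}_\epsilon(B_R(p_i,X_i))$, independent of $i$. A standard diagonal extraction over a countable dense set of pairs $(R,\epsilon)$ then produces a pointed Gromov--Hausdorff subsequential limit $(X_\infty,d_\infty,p_\infty)$. The two hypotheses---measure doubling at scales $\le\rho$, and the partial length space structure to the base point---will supply, respectively, a local combinatorial ball doubling and a global propagation of it.

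First I would extract a purely metric doubling from the measure doubling: there exists $N_0=N_0(A_0)$, for instance $N_0=A_0^{4}$, such that for every $i$, every $x\in X_i$, and every $r\le\rho/2$, any $(r/2)$-separated subset of $B_r(x)$ has cardinality $\le N_0$. The standard packing--volume estimate does this: if $\{y_j\}\subset B_r(x)$ is $(r/2)$-separated, then the balls $B_{r/4}(y_j)$ are pairwise disjoint and sit inside $B_{2r}(x)$; iterated local doubling at $y_j$ yields $\mu(B_{r/4}(y_j))\ge A_0^{-3}\mu(B_{2r}(y_j))\ge A_0^{-3}\mu(B_r(x))$ (using $B_r(x)\subset B_{2r}(y_j)$), and comparing with $\mu(B_{2r}(x))\le A_0\mu(B_r(x))$ bounds the cardinality by $A_0^{4}$. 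By maximality, every $r$-ball with $r\le\rho/2$ is therefore covered by $\le N_0$ balls of radius $r/2$, and, iterating $j$ times, by $\le N_0^{j}$ balls of radius $2^{-j}r$.

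Second, I would globalize this local statement using the partial length space hypothesis. Fix $R>0$, choose $r_0\le\rho/2$, set $m=\lceil 2R/r_0\rceil+1$, and let $T_k$ denote the set of points reachable from $p_i$ by some chain $p_i=x_0,x_1,\ldots,x_k$ with $d(x_\ell,x_{\ell+1})\le r_0/2$. Any $x\in B_R(p_i,X_i)$ is joined to $p_i$ by a continuous path of length $<R+\delta$ (for arbitrarily small $\delta>0$), which subdivides into $\le m$ such hops, so $B_R(p_i,X_i)\subset T_m$. By induction on $k$, $T_k$ is covered by at most $N_0^{k}$ balls of radius $r_0/2$: the step from $k$ to $k+1$ uses that $T_{k+1}$ lies in the $r_0/2$-neighborhood of $T_k$, hence in the union of enlarged balls $B_{r_0}(y_j)$ over the previous centers $y_j$, each of which is then re-covered by $\le N_0$ balls of radius $r_0/2$ by the local ball doubling. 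Iterating the local doubling once more inside each such $r_0/2$-ball down to scale $\epsilon$ gives $\operatorname{Cap}_\epsilon(B_R(p_i,X_i))\le N_0^{m+\lceil\log_2(r_0/\epsilon)\rceil}$, a bound depending only on $R,\epsilon,\rho,A_0$.

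The main obstacle is essentially bookkeeping, concentrated in two delicate points: (i) invoking the partial length space property with a small $\delta$-slack so that the hop count $m$ can be chosen uniformly in $x\in B_R(p_i,X_i)$ and in $i$; and (ii) verifying that the inductive step honestly $(r_0/2)$-covers every point reachable in at most $k+1$ hops, not merely the discrete chain endpoints produced at step $k$. Once the uniform capacity bound is established, Gromov's precompactness principle together with the diagonal procedure over exhausting $R_\ell\to\infty$ and $\epsilon_\ell\to 0$ delivers a pointed Gromov--Hausdorff converging subsequence $(X_i,d_i,p_i)\to(X_\infty,d_\infty,p_\infty)$.
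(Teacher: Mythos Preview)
Your argument is correct. Both the paper and you combine the same two ingredients---local measure doubling and the partial-length-to-$p_i$ property---but the logical architecture differs. The paper runs a Hopf--Rinow--style contradiction: it sets $R=\sup\{r:\bar B_r(p_i)\text{ is GH-precompact}\}$, shows $R\ge\rho/4$ via the same packing--volume computation you do, and then argues that $R<\infty$ is impossible by pulling back an $\epsilon$-net from the limit of $B_r(p_i)$ (with $r$ close to $R$) and thickening each net point by a local $\rho/4$-ball to cover $B_{r+\rho/10}(p_i)$. Your proof instead bypasses the contradiction and the intermediate limit entirely: you convert measure doubling into a metric ball-doubling constant $N_0$, propagate it along hop-chains of length $\le r_0/2$ supplied by the partial length structure, and obtain the explicit bound $\operatorname{Cov}_\epsilon(B_R(p_i))\le N_0^{\,\lceil 2R/r_0\rceil+1+\lceil\log_2(r_0/\epsilon)\rceil}$ directly. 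What this buys you is a clean quantitative dependence on $(R,\epsilon,\rho,A_0)$ in one stroke, which the paper only recovers a posteriori (in the proof of Theorem~\ref{thm-2-1}) by invoking Gromov's principle in the reverse direction. The paper's route, on the other hand, isolates the Hopf--Rinow propagation step as a reusable template, which it then recycles verbatim in the proofs of Theorems~\ref{mthm-1}, \ref{thm-int}, and \ref{thm-cptbdy}.
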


\begin{proof}
	Since $(X_i,d_i)$ is of partial length to $p_i$, every closed ball $\bar B_r(p_i)=\{x\in X_i: d(x,p_i)\le r\}$ coincides with the closure $\overline{B_r(p_i)}$ of open ball. We follow a Hopf-Rinow-typed argument (cf. \cite[Theorem 2.5.28]{Burago-Burago-Ivanov2001}). Let $$R=\sup\{r>0: (\bar B_r(p_i),p_i)\subset (X_i,p_i) \text{ admits a converging subsequence}\}.$$ In order to prove Lemma \ref{lem-2-1}, it suffices to show that $R=\infty$.
	
	First, it is easy to see that $R\ge \frac{\rho}{4}$ by a standard argument. Indeed, for any $0<\epsilon<\frac{\rho}{2}$, let $\{q_j\}_{j=1}^{K}$ be $\epsilon$-discrete points in $B_{\frac{\rho}{4}}(p_i)\subset X_i$. Then all $\frac{\epsilon}{2}$-balls centered at $q_j$,  $B_{\frac{\epsilon}{2}}(q_j)$, are pairwise disjoint, and contained in $B_{\frac{\rho}{2}}(p_i)$. Hence,
	$$\mu(B_{\frac{\rho}{2}}(p_i))\ge \sum_{j=1}^{K}\mu(B_{\frac{\epsilon}{2}}(q_j)).$$ 
	By \eqref{local-doubling}, for the integer $N(2\rho/\epsilon)=[1+\log_2\rho-\log_2\epsilon]$ such that $2^{N-1}\epsilon\in [\frac{\rho}{2},\rho]$,
	$$\mu(B_{\frac{\epsilon}{2}}(q_i))\ge A_0^{-1} \mu (B_\epsilon(q_i))\ge A_0^{-N}\mu(B_{2^{N-1}\epsilon}(q_i)).$$
	Since $B_{2^{N-1}\epsilon}(q_i)$ contains $B_{\frac{\rho}{4}}(p_i)$, each $\mu(B_{2^{N-1}\epsilon}(q_i))$ can be replaced by $\mu(B_{\frac{\rho}{4}}(p_i))$ in the above inequality. Thus, 
	$$\mu(B_{\frac{\rho}{2}}(p_i))\ge K A_0^{-N}\mu(B_{\frac{\rho}{4}}(p_i)),$$
	which together with \eqref{local-doubling} yields
	$\operatorname{Cap}_\epsilon(B_{\frac{\rho}{4}}(p_i))=\max\{K\}\le A_0^{N(\rho/\epsilon)+1}$.
	
	By the Gromov's precompactness, the closed ball $(\bar B_{\frac{\rho}{4}}(p_i),p_i)$ admits a subsequence that converges to a  limit partial length space $(X_\infty, p_\infty)$ to $p_\infty$ in the pointed Gromov-Hausdorff topology.

	Secondly, we show that $R=\infty$ by a contradictory argument. Assume that $R=R_0<\infty$. Let us choose $r>0$ such that $0\le R_0-r<<\frac{\rho}{100}$. Then by the definition of $R_0$, a subsequence of $(B_r(p_i),p_i)$ converges to a pointed compact limit space $(X_\infty, p_\infty)$. For simplicity we assume that $(B_r(p_i),p_i)$ itself converges. 
	
	For any $0<\epsilon\le \frac{\rho}{10}$, let us take finite points $q_j\in X_\infty$ with $0<j\le C(\epsilon; X_\infty)$ whose $\epsilon$-balls cover $X_\infty$, and let $q_{i;j}\in X_i$ be points pointwise close to those $q_j$ in $X_\infty$ in the Gromov-Hausdorff topology. Then by the fact that $X_i$ is a partial length space to $p_i$, $\cup_{j}B_{\frac{\rho}{4}}(q_{i;j})$ covers $\overline{ B_{r+\frac{\rho}{10}}(p_i)}$ for all large $i$. Hence, by the first step, $C(\epsilon;X_\infty)\cdot A_0^{N(\rho/\epsilon)+1}$ of $\epsilon$-balls covers $\overline{B_{r+\frac{\rho}{10}}(p_i)}$. It follows that the $2\epsilon$-packing number
	$\operatorname{Cap}_{2\epsilon}(\overline{ B_{r+\frac{\rho}{10}}(p_i)})\le C(\epsilon;X_\infty)\cdot A_0^{N(\rho/\epsilon)+1}$. By Gromov's precompactness again, $(\overline{ B_{r+\frac{\rho}{10}}(p_i)},p_i)$ admits a convergent subsequence. 
	
	Since $r+\frac{\rho}{10}>R_0$, we meet a contradiction.
\end{proof}

\begin{proof}[Proof of Theorem \ref{thm-2-1}]
	~
		
	Since the local doubling property is preserved after taking completion, without loss of generality we may assume that $X$ itself is complete. 
	
	Let $x$ be an arbitrary fixed point in $X$. 
	We first show by the fact $(X,d)$ is a length space, that for any $R>0$ and $q\in B_R(x)$, 
	\begin{equation}\label{ineq-two-points}
	\mu(B_{\frac{\rho}{2}}(q))\le A_0^{\lceil \frac{2R}{\rho} \rceil}\mu(B_{\frac{\rho}{2}}(x)),
	\end{equation}
	where $\lceil \cdot \rceil$ is to take the ceiling integer.
	 
	Indeed, by the Hopf-Rinow-Cohn-Vossen theorem $X$ is in fact geodesic. Let $\gamma$ be a path with $\gamma(0)=x$ and $\gamma(1)=q$ whose length $= d(x,q)$. Then there are points $p_i=\gamma(t_i)$ such that $p_0=x$, $p_{N}=q$, $d(p_i,p_{i+1})=\frac{\rho}{2}$ for $i<N-1$, $0<d(p_{N-1},q)\le \frac{\rho}{2}$, and $N= \lceil \frac{2R}{\rho} \rceil$ is the ceiling integer. In the following we denote $\mu(p_i,r)=\mu(B_r(p_i))$ for simplicity. Then we have $\mu(p_1,\frac{\rho}{2})\le \mu(x,\rho).$ And by \eqref{local-doubling},
	$$\mu(p_2,\frac{\rho}{2})\le \mu(p_1,\rho)\le A_0\mu(p_1,\frac{\rho}{2})\le A_0\cdot \mu(x,\rho)\le A_0^2 \cdot \mu(x, \frac{\rho}{2}),$$
	By induction,
	$$\mu(q,\frac{\rho}{2})\le A_0^N\cdot \mu(x,\frac{\rho}{2}),$$
	which yields \eqref{ineq-two-points}.

	Next, by Lemma \ref{lem-2-1}, any ball $B(x,r)$ in $X$ admits a uniform number $C(r)=C_{\rho,A_0}(r)$ depends only on $r$, $\rho$ and $A_0$ such that its closure $\overline{B(x,r)}$ is covered by $C(r)$ many $\frac{\rho}{2}$-balls $B_{\frac{\rho}{2}}(q_j)$. Therefore, by \eqref{ineq-two-points} 
	$$\mu(B_r(x))\le \mu(\overline{B_r(x)})\le \sum_{j=1}^{C(r)}\mu(B_{\frac{\rho}{2}}(q_j))\le C(r)A_0^{\lceil \frac{2r}{\rho} \rceil}\cdot \mu(B_{\frac{\rho}{2}}(x)).$$
	
	By taking $A_{\rho,A_0}(r)=C(r)A_0^{\lceil \frac{2r}{\rho} \rceil}$, the proof of Theorem \ref{thm-2-1} is complete.
\end{proof}

\begin{remark}\label{rem-non-length}
	We point out that Lemma \ref{lem-2-1} (and hence Theorem \ref{thm-2-1}) generally fails when $(X_i,d_i)$ is not a partial length space to $p_i$. Example \ref{ex-petersen} provides a counterexample. 
	
	Indeed, for $\lambda_i\searrow1$, let $U_i$ be the open ball $B_{\lambda_i\frac{\pi}{2}}(o_i)\subset (\mathbb S^2,\lambda_i d_0)$, where $(S^2,d_0)$ is in Example \ref{ex-petersen}, and let $\pi:(\widetilde{U}_i,\tilde o_i)\to (U_i,o_i)$ be the universal cover. Let us consider the preimages $\widetilde{W}_i=\pi^{-1}(\overline{ B_{\frac{\pi}{4}}(o_i)})$ of closed balls. Then $(\widetilde{W}_i,d_{\widetilde{U}},\mu_i)$ equipped with the restricted metric $d_{\widetilde{U}_i}$ and the Riemannian volume is $(\frac{\pi}{4},A_0)$-local doubling for constant $A_0>0$. But $(\widetilde{W}_i,d_{\widetilde{U}_i})$ is even not a proper metric space.
\end{remark}

\begin{proof}[Proof of Theorem \ref{thm-lipschitz-type}]
	~
	
	We prove the global doubling property for the completion of $(W,d_W,\mu)$. By applying Theorem \ref{thm-2-1}, it suffices to show that there is a constant $A_{n,\tau}$ depends on $n$ and $\tau$ such that $\overline{(W, d_W)}$ is $(\rho(r_0,t_0,\tau), A_{n,\tau, r_0})$-local doubling equipped with the natural Borel measure and its length metric $d_W$. 
	
	For simplicity, we directly assume $(W,d_W)$ to be its completion. Let $t_0$ be the distance from $\partial W$ toward interior such that for $0<t<t_0$, $\overline{B_{\tau t}(W_t^\circ,W, d_W)}\supset W$. Then
	for any $x\in W$ and $0<t<t_0$, there is $y\in \overline{W_{t}^\circ}$ such that $d_W(x,y)\le \tau t$, and $B_{t+\tau t}(x,W)$ contains a regular ball $B_{t}(y,W)=B_{t}(y, M)$, i.e., the exponential map $\exp_y:B_t(0)\subset T_yM\to B_t(y,W)$ is well-defined. 
	
	Let $t_j=2^{-j}t_0$ and let $j_0>0$ be the minimal integer such that $\tau t_{j_0}\le \frac{r_0}{2}$. For any $0<r\le \rho(r_0,t_0,\tau)=\min \left\{\frac{r_0}{2}, (1+\tau)t_{j_0}\right \}$, there is an integer $j$ such that $(1+\tau)t_j \le \frac{r}{2}<(1+\tau)t_{j-1}$. Let $y_j\in \overline{W_{t_j}^\circ}$ be taken as above.  By \eqref{mthm-1.1} and $r+\tau t_j\le r_0$, $B_{r+\tau t_j}(y_j,M)$ has a compact closure in $M$, such that the Bishop-Gromov's relative volume comparison (\cite{Bishop-Crittenden}, \cite{GLP1981}) holds for $B_{r+\tau t_j}(y_j,M)$ and $B_{t_j}(y_j,M)$, i.e., 
	$$\operatorname{vol}(B_{r+\tau t_j}(y_j,M))\le A_n\left(\frac{r+\tau t_j}{t_j}\right) \operatorname{vol}(B_{t_j}(y_j,M)),$$ where $A_n(\frac{r_1}{r_2})$ is the volume ratio of the $r_1$-ball over the $r_2$-ball in the hyperbolic $n$-space. 
	
	Moreover, $B_{r+\tau t_j}(y_j,W)$ is contained in $B_{r+\tau t_j}(y_j,M)$ and $B_{t_j}(y_j, W)$ coincides with $B_{t_j}(y_j, M)$ as subsets.  Then the Riemannian volume $\mu(r,x)=\mu(B_r(x,W))$ of open ball $B_r(x,W)$ in $(W,d_W)$ satisfies
	$$\mu(r+\tau t_j,y_j)\le \operatorname{vol}(B_{r+\tau t_j}(y_j,M))\le A_n\(\frac{r+\tau t_j}{t_j}\) \mu(t_j,y_j).$$
	By $d_W(x,y_j)\le \tau t_j$ and the triangle inequality, we derive 
	\begin{align*}
	\mu(r,x) &\le \mu(r+\tau t_j,y_j)\le A_n\(\frac{r+\tau t_j}{t_j}\) \mu(t_j,y_j) \\
	&\le A_n\(\frac{2(1+\tau)t_{j-1}+\tau t_j}{t_j}\) \mu(t_j+\tau t_j,x) \qquad\text{ by } \frac{r}{2}<(1+\tau)t_{j-1}\\
	&	\le A_n\(\frac{(4+5\tau)t_j}{t_j}\)\cdot  \mu(\frac{r}{2},x), \qquad \text{ by } (1+\tau) t_j \le \frac{r}{2}
	\end{align*}
	Let $A_{n,\tau, r_0}=\max_{0\le r\le \frac{r_0}{2}}\left\{A_n\(\frac{(4+5\tau)r}{r}\)\right\}$. Then $(W, d_W, \mu)$ is $(\rho(r_0,t_0,\tau), A_{n,\tau, r_0})$-local doubling. By Theorem \ref{thm-2-1} the proof is complete.
\end{proof}

\section{Gromov-Hausdorff precompactness for undistorted domains}\label{sec-GH-precom}

In this section we prove the main Theorem \ref{mthm-1}, which is more general than Theorem \ref{thm-lipschitz-type}. Then we prove Theorems \ref{thm-int} and \ref{thm-cptbdy}.

The proof of Theorem \ref{mthm-1} is motivated by the ideas to exclude those counterexamples mentioned earlier respectively: Example \ref{ex-petersen} by \eqref{mthm-1.1}, and Example \ref{ex-distortion} by \eqref{mthm-1.3}. In principle, once a Gromov-Hausdorff precompactness with an uniform packing number estimate is established locally on any ball of an uniform radius, then a Hopf-Rinow-typed argument implies the desired global precompactness for length spaces. The proofs of Theorems \ref{thm-int} and \ref{thm-cptbdy} are also similar.

In order to avoid confusion, we first recall and fix some notations. As before, an open ball centered at $x$ in a metric space $(X,d_X)$ is denoted by $B_r(x,X)$. An open ball in $(S,d_X)$ with the restricted distance is denoted by $B_r(x,S,d_X)=B_r(x,X)\cap S$. Let $d_S$ be the length metric $d_S$ on $S$ induced by $d_X$. For a subset $E\subset S$, $B_r(E, S)$ is the $r$-neighborhood of $E$ in $(S,d_S)$. 

Recall that, observed by Gromov, the traditional precompactness for $r$-balls in Riemannian manifolds $M$ with a lower Ricci curvature bound is obtained by Bishop-Gromov's relative volume comparison (\cite{Bishop-Crittenden}, \cite{GLP1981}) via shifting the centers of balls, which requires $3r$-balls are relative compact in $M$ (see e.g. \cite[Theorem 2.20]{Perales-Sormani2014}). We need a sharpened version without shifting balls' center as below.
\begin{lemma}\label{lem-balls}
	Let $\{B_{R_0}(p_i,M_i)\}$ be a sequence of $R_0$-balls centered at $p_i$ in (maybe incomplete) Riemannian $n$-manifolds $M_i$, such that 
	\begin{enumerate}
		\item\label{lem-balls-1} $\exp_p:B_{R_0}(o)\to B_{R_0}(p_i,M_i)$ is well-defined, or equivalently, $B_r(p_i,M_i)$ has a compact closure in $M_i$ for any $0<r<R_0$,
		\item\label{lem-balls-2} $\operatorname{Ric}\ge -(n-1)$ on $B_{R_0}(p_i,M_i)$.
	\end{enumerate}
	Then the open ball $(B_{R_0}(p_i,M_i), p_i)$ with its length metric $d_{B_{R_0}(p_i,M_i)}$ is precompact in the pointed Gromov-Hausdorff topology. So is $(B_{R_0}(p_i,M_i), d_{M_i}, p_i)$ equipped with the restricted metric $d_{M_i}$.
\end{lemma}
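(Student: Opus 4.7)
The plan is to reduce the precompactness assertion to Theorem \ref{thm-lipschitz-type} applied to a slightly shrunken concentric ball, and then transfer the resulting covering-number bounds back to $W := B_{R_0}(p_i, M_i)$. The key structural fact, coming from the hypothesis that $\exp_{p_i}$ is defined on all of $B_{R_0}(o) \subset T_{p_i}M_i$ together with the relative-compactness of sub-balls, is that every point of $W$ is the endpoint of a minimizing radial geodesic from $p_i$, so $d_W(p_i,\cdot) = d_{M_i}(p_i,\cdot)$ on $W$ and $(W,d_W)$ is a partial length space to $p_i$.

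First I would fix $\delta \in (0, R_0)$ and set $W'_\delta := B_{R_0 - \delta}(p_i, M_i)$. I would verify $W'_\delta \in \mathcal{D}(n, \delta/2, R_0 - \delta, t)$: its $(\delta/2)$-neighborhood in $M_i$ is $B_{R_0 - \delta/2}(p_i, M_i)$, which has compact closure by the relative-compactness hypothesis and inherits $\Ric \ge -(n-1)$; and radial retraction along the radial geodesic from $p_i$ through any $x \in W'_\delta$ supplies a point $y \in (W'_\delta)^\circ_t$ with $d_{W'_\delta}(x, y) = t$ for every $t \in (0, R_0 - \delta)$, so $W'_\delta$ is $(t, R_0 - \delta)$-undistorted of $1$-Lipschitz type. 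Theorem \ref{thm-lipschitz-type} combined with Lemma \ref{lem-2-1} applied to the resulting globally doubling length-measured space then produces a uniform pointed Gromov-Hausdorff covering-number bound for $\{(W'_\delta, d_{W'_\delta}, p_i)\}_i$ depending only on $n$, $\delta$, and $R_0$.

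Next I would transfer these bounds to the length metric $d_W$ on $W$ via the comparison $d_W \le d_{W'_\delta}$ on $W'_\delta \times W'_\delta$, which holds because $W'_\delta \subset W$ admits no fewer admissible paths; any $d_{W'_\delta}$-$\epsilon$-net of $W'_\delta$ is then automatically a $d_W$-$\epsilon$-net of the same set. Since $B_R(p_i, W, d_W) = B_R(p_i, M_i) \subset W'_\delta$ whenever $R < R_0 - \delta$, choosing $\delta = (R_0 - R)/2$ gives a uniform $d_W$-covering-number bound on each such ball. To handle $R \ge R_0$, where $B_R(p_i, W, d_W) = W$, I would run a Hopf-Rinow-type extension in the spirit of the proof of Lemma \ref{lem-2-1}: every $x \in W$ is radially $d_W$-$(\epsilon/3)$-close to a point in $B_{R_0 - \epsilon/3}(p_i, M_i)$, so any $d_W$-$(\epsilon/3)$-net of that smaller ball (obtained by the previous step with $\delta = \epsilon/6$) becomes a $d_W$-$(2\epsilon/3)$-net of the whole $W$ after the triangle inequality.

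Finally, precompactness in the restricted metric $d_{M_i}$ follows from $d_{M_i} \le d_W$ on $W \times W$: every $d_W$-$\epsilon$-net is automatically a $d_{M_i}$-$\epsilon$-net, and $d_{M_i}(p_i, \cdot) = d_W(p_i, \cdot)$ aligns the base-point balls. The main obstacle is that the buffer condition underlying Theorem \ref{thm-lipschitz-type} fails for $W$ itself, since the $r_0$-neighborhood of $W$ in $M_i$ need not have compact closure; introducing $W'_\delta$ is precisely what restores this buffer, and the length-metric comparison necessarily goes in the direction $d_W \le d_{W'_\delta}$ (coarser on the larger domain), which is exactly the direction needed to transfer covering bounds from $W'_\delta$ back to $W$.
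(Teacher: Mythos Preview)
Your argument is correct and follows the same overall architecture as the paper's proof: shrink the ball to create a buffer, obtain uniform covering-number control on the shrunk ball via the doubling machinery behind Lemma~\ref{lem-2-1}, and then exhaust $W=B_{R_0}(p_i,M_i)$ by letting the buffer vanish. The difference lies in how the shrunk ball is handled. You equip $W'_\delta=B_{R_0-\delta}(p_i,M_i)$ with its \emph{own} length metric $d_{W'_\delta}$, invoke Theorem~\ref{thm-lipschitz-type} to get global doubling, and then transfer the resulting covering bounds back to $d_W$ via $d_W\le d_{W'_\delta}$. The paper instead equips the shrunk closed ball $W_{i,j}=\bar B_{(1-2^{-j})R_0}(p_i,M_i)$ directly with the \emph{restricted} metric $d_W=d_{U_i}$; since every point of $W_{i,j}$ is at $d_W$-distance $\ge 2^{-j}R_0$ from $\partial W$, small $d_W$-balls in $W_{i,j}$ coincide with $M_i$-balls, so local doubling follows immediately from Bishop--Gromov and Lemma~\ref{lem-2-1} applies without any detour through Theorem~\ref{thm-lipschitz-type} and without a metric-transfer step. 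A diagonal argument over $j$ then finishes.

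What your route buys is a clean packaging: once Theorem~\ref{thm-lipschitz-type} is available, balls are just the simplest $1$-Lipschitz-type domains and the lemma becomes a formal corollary. What the paper's route buys is economy and logical independence: it uses only Bishop--Gromov and Lemma~\ref{lem-2-1}, avoiding the heavier Theorem~\ref{thm-lipschitz-type} (whose proof already passes through Theorem~\ref{thm-2-1} and Lemma~\ref{lem-2-1}), and there is no need to juggle two length metrics. One minor point: in your undistortedness check you say the radial retraction gives $d_{W'_\delta}(x,y)=t$; when $d_{M_i}(p_i,x)<t$ you should take $y=p_i$ and get $d_{W'_\delta}(x,y)<t$, which is of course still sufficient.
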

\begin{proof}
	For simplicity, we denote $U_i=B_{R_0}(p_i,M_i)$ equipped with $d_{U_i}$. 
	Let $W_{i,j}=\bar B_{(1-2^{-j})R_0}(p_i,M_i)$, which by \eqref{lem-balls-1} coincides with the closed ball $\bar B_{(1-2^{-j})R_0}(p_i,U_i)$ as subsets in $M_i$.
	Sine every $x\in W_{i,j}$ is $2^{-j}R_0$-away from $\partial U_i$, $B_r(x, W_{i,j}, d_{U_i})$ coincides with $B_r(x, M_i)$ for any $r<2^{-j}R_0$. Hence $(W_{i,j},d_{U_i})$ is $(2^{-j-2}R_0,A_0)$-local doubling by Bishop-Gromov's relative volume comparison theorem.  Because $(W_{i,j}, d_{U_i})$ is a partial length space to $p_i$, by Lemma \ref{lem-2-1}, $(W_{i,j}, d_{U_i}, p_i)$ admits a converging subsequence for any fixed $j$ as $i\to \infty$.
	
	At the same time, the pointed Gromov-Hausdorff distance between $(W_{i,j}, d_{U_i}, p_i)$ and $(U_i,p_i)$ is no more than $(1-2^{-j})R_0$. By a standard diagonal argument, $(U_i,p_i)$ itself sub-converges in the Gromov-Hausdorff topology. 
	
	Because the identity map from $(B_{R_0}(p_i, M_i), d_{B_{R_0}(p_i,M_i)})$ to $(B_{R_0}(p_i, M_i), d_{M_i})$ $1$-Lipschitz, $\{(B_{R_0}(p_i,M_i), d_{M_i}, p_i)\}$ is also Gromov-Hausdorff precompact.
\end{proof}

\begin{remark}
	In fact, by shifting balls' centers and relative volume comparison, the $\epsilon$-packing number $\operatorname{Cap}_\epsilon(B_{1}(p_i,M_i),d_{B_{1}(p_i,M_i)})$ in Lemma \ref{lem-balls} can be explicitly estimated. We adopt the argument above since the idea is also suitable for more general domains considered below.
\end{remark}
\begin{remark}\label{rem-condition1}
	In Theorem \ref{mthm-1}, \eqref{mthm-1.1} is only used in deriving Lemma \ref{lem-balls}. We will apply the conclusion of Lemma \ref{lem-balls} in the rest proof of Theorem \ref{mthm-1} below.
\end{remark}

\begin{lemma}\label{lem-boundary}
	Let $\{W_i\}$ be a sequence of domains in (maybe incomplete) Riemannian $n$-manifolds $M_i$ satisfying \eqref{mthm-1.1} and \eqref{mthm-1.3}.

	Then for any $x\in W_i$ and any $\epsilon\le 1$, 
	the closed ball $\bar B_{r_0}(x, W_i)$ in $(W_i, d_{W_i})$ can be covered by at most $A_{r_0,t_0,s}(\epsilon)$ $\epsilon$-balls.
\end{lemma}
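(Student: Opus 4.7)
The plan is to combine the uniform bound on packing numbers of ambient $M$-balls (provided by Lemma \ref{lem-balls} via \eqref{mthm-1.1}) with the undistortedness condition \eqref{mthm-1.3}, which allows approximation of points of $W$ by points of the $t$-interior $W_t^\circ$ where $d_W$ agrees locally with $d_M$. Specifically, I would take a maximal $\epsilon$-separated subset $\{y_j\}$ of a slightly shrunken ball in $W_i$, shift each $y_j$ to a nearby interior point $z_j\in W_{t_\epsilon}^\circ$, promote the $d_{W_i}$-separation of the $z_j$'s to a $d_{M_i}$-separation, and bound the count by Lemma \ref{lem-balls} applied to $B_{r_0}(x,M_i)$.

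In detail, given $\epsilon\le 1$ I first choose $t_\epsilon>0$ with $s(t_\epsilon)\le\epsilon/20$ and $t_\epsilon\le\min\{t_0,r_0/10,\epsilon/20\}$, possible since $s(t)\to 0$. A maximal $(\epsilon/2)$-net in $\bar B_{r_0-\epsilon/4}(x,W_i)$, used as centers of $\epsilon$-balls, covers all of $\bar B_{r_0}(x,W_i)$ (pull any outer-shell point back by $\epsilon/2$ along a near-minimal $d_{W_i}$-path), so it suffices to bound the $(\epsilon/2)$-packing number $K$ of $\bar B_{r_0-\epsilon/4}(x,W_i)$. For such a maximal set $\{y_j\}_{j=1}^K$, condition \eqref{mthm-1.3} furnishes $z_j\in W_{t_\epsilon}^\circ$ with $d_{W_i}(y_j,z_j)<2s(t_\epsilon)\le\epsilon/10$; these $z_j$'s are pairwise $d_{W_i}$-separated by at least $3\epsilon/10$ and lie in $B_{r_0}(x,M_i)$. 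The crucial observation is that for $z\in W_{t_\epsilon}^\circ$, the $M_i$-ball $B_{t_\epsilon}(z,M_i)$ is contained in $W_i$ and $d_{W_i}$ coincides with $d_{M_i}$ on it: an $M_i$-geodesic from $z$ of length less than $t_\epsilon$ (which exists thanks to \eqref{mthm-1.1}) cannot exit $W_i$, because the exit time would produce a $d_{W_i}$-Cauchy sequence inside $W_i$ whose limit in the $d_{W_i}$-completion would be a point of $\partial W_i$ at $d_{W_i}$-distance less than $t_\epsilon$ from $z$, contradicting $z\in W_{t_\epsilon}^\circ$. Consequently, $d_{M_i}(z_j,z_k)<t_\epsilon$ would force $d_{W_i}(z_j,z_k)=d_{M_i}(z_j,z_k)<t_\epsilon<3\epsilon/10$, contradicting the $d_{W_i}$-separation; so the $z_j$'s are $d_{M_i}$-separated by $\ge t_\epsilon$. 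Finally, Lemma \ref{lem-balls} applied to $B_{r_0}(x,M_i)$ (Ricci $\ge -(n-1)$ and complete closure by \eqref{mthm-1.1}) yields a uniform bound $N_0(n,r_0,t_\epsilon)$ on its $t_\epsilon$-packing number in the length metric $d_{B_{r_0}(x,M_i)}$, which dominates $d_{M_i}$; hence $K\le N_0(n,r_0,t_\epsilon)=:A_{r_0,t_0,s}(\epsilon)$.

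The delicate step is the passage from $d_{W_i}$-separation to $d_{M_i}$-separation, which genuinely requires both hypotheses: \eqref{mthm-1.3} to shift base points deep into $W_i$, and \eqref{mthm-1.1} both to invoke Lemma \ref{lem-balls} and to guarantee that short $M_i$-geodesics from $W_{t_\epsilon}^\circ$-points cannot escape $W_i$. Without either assumption, the $d_{W_i}\neq d_{M_i}$ phenomenon seen in Examples \ref{ex-petersen} and \ref{ex-distortion} makes $\epsilon$-packing numbers blow up.
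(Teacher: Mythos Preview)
Your proof is correct and follows essentially the same strategy as the paper: both shift to points of the $t$-interior $W_{t}^\circ$ (using \eqref{mthm-1.3}), exploit that around such points $d_{W_i}$ and $d_{M_i}$ agree on balls of radius $\le t$ (via the Cauchy-sequence/exit-time argument you give), and then invoke Lemma~\ref{lem-balls} on $B_{r_0}(x,M_i)$ to get the uniform count. The only cosmetic difference is that the paper phrases this as a covering argument on the exhaustion $W_{i,j}\cap B_{r_0}(x_i,W_i)$ followed by a diagonal Gromov--Hausdorff approximation, whereas you go directly through a packing-number bound; your route is slightly more economical but the content is the same.
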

\begin{proof}
	
	Let $t_j\to 0$ be a monotone decreasing sequence of positive numbers. By the definition of $(s(t),t_0)$-undistortedness \eqref{mthm-1.3}, $s_{j}=s(t_j)\to 0$ as $j\to \infty$, and
	$W_i$ admits an exhaustion of its interior $W_i^\circ=\cup_{j=1}^\infty W_{i,j}$ of closed subsets, where $W_{i,j}=\overline{\left(W_i\right)_{t_j}^\circ}$ is the closure of $t_j$-interior of $W_i$, which satisfies 
	\begin{equation}\label{localest-1} W_{i,j}\subset W_{i,j+1},\;
	d_{W_i}(W_{i,j}, \partial W_i)\ge t_j>0, \text{ and }
	\overline{B_{s_j}(W_{i,j}, d_{W_i})}\supset W_i \text{ for any $j$.}	
	\end{equation}
	
	For any $x_i\in W_i$, we first show that for any fixed $j$,  $\{W_{i,j}\cap B_{r_0}(x_i, W_i)\}$ admits a converging subsequence as $i\to \infty$.

	Indeed, by Lemma \ref{lem-balls}, $B_{r_0}(x_i,M_i)$ is covered by $A_n(\epsilon,r_0)$ of $\frac{\epsilon}{2}$-balls $B_\epsilon(q_{i;k},M_i)$ centered at $q_{i;k}\in B_{r_0}(y_i,M_i)$, where $A_n$ is provided by Lemma \ref{lem-balls}. 
	At the same time, $B_{r_0}(x_i, W_i)$ is contained in $B_{r_0}(x_i, M_i)$.
	By choosing $q_{i;k;j}'\in B_{\frac{\epsilon}{2}}(q_{i;k},M_i)\cap W_{i,j}\cap B_{r_0}(x_i, W_i)$, $W_{i,j}\cap B_{r_0}(x_i,W_i)$ is covered by at most $A_n(\epsilon,r_0)$ of $\epsilon$-balls $B_{\epsilon}(q_{i;k;j},M_i)$ centered in $W_{i,j}\cap B_{r_0}(x_i, W_i)$. 
	
	For any fixed $j$, take $\epsilon\le t_j$. Then by \eqref{localest-1}, for any $z\in W_{i,j}$, $B_{\epsilon}(z,M_i)\cap W_{i,j}=B_{\epsilon}(z, W_i)$ as subsets.  Hence $W_{i,j}\cap B_{r_0}(x_i, W_i)$ is covered by at most $A_n(\epsilon,r_0)$ $\epsilon$-balls $B_{\epsilon}(q_{i;k,j}, W_i)$. By Gromov's precompactness theorem, $B_{r_0}(x_i, W_{i,j}, d_{W_i})$ sub-converges in the Gromov-Hausdorff topology as $i\to \infty$.
	
	Secondly, by \eqref{localest-1} $W_{i,j}\cap B_{r_0}(x_i, W_i)$ is $2s_j$-close to $B_{r_0}(x_i, W_i)$ measured in the Gromov-Hausdorff distance. By a standard diagonal argument, $B_{r_0}(x_i, W_i)$ also sub-converges as $i\to \infty$. 
	
	Now by Gromov's precompactness principle, we conclude Lemma \ref{lem-boundary}.
\end{proof}

\begin{proof}[Proof of Theorem \ref{mthm-1}]
	~
	
	It suffices to consider a sequence of domains $(W_i, d_{W_i}, p_i)\in \mathcal{D}(n,r_0,t_0,s(t))$. 

	Theorem \ref{mthm-1} will follow from a Hopf-Rinow-typed argument similar to the proof of Lemma \ref{lem-2-1}. 
	
	Indeed, let $R=\sup\{r: (\bar B_r(p_i, W_i),p_i) \text{ admits a convergent subsequence}\}$. Then by Lemma \ref{lem-boundary} $R\ge r_0$.
	
	If $R=R_0< \infty$, then a contradiction will be derived from the 2nd part of the proof of Lemma \ref{lem-2-1} together with Lemma \ref{lem-boundary}.
\end{proof}

\begin{remark}
	By the proof of Lemma \ref{lem-boundary}, Theorem \ref{mthm-1} also holds for the following domains with boundary discretely undistorted defined as follows.
	
	Let $t_j$ and $s_j$ be positive numbers such that $t_{j}\to 0$ monotonically, and $s_{j}\to 0$ as $j\to \infty$.	
	Let $W$ be a domain in (maybe incomplete) Riemannian $n$-manifolds $M$, equipped with their length metric $d_W$ and base points $p$. Its boundary $\partial W$ is called \emph{$(t_j,s_j)$-undistored} if $W$ admits an exhaustion of its interior $W^\circ=\cup_{j=1}^\infty W_j$ of closed subsets satisfying 
	$$W_j\subset W_{j+1}, \quad d_W(W_j, \partial W)\ge t_j>0, \quad \text{and} \quad
	\overline{B_{s_j}(W_j, W)}\supset W \text{ for any $j$.}$$ (In particular, $W_j$ is not empty).
\end{remark}

The proof of Theorem \ref{thm-int} is similar to that of Theorem \ref{mthm-1}. 
\begin{proof}[Proof of Theorem \ref{thm-int}]
	~
	
	We will directly prove \eqref{thm-int-4}, which implies \eqref{thm-int-1}.
	Let $M_i$ be a sequence of open Riemannian $n$-manifolds whose $\Ric_{M_i}\ge -(n-1)$. 
	Let $(M_i)_r^\circ$ be the $r$-interior of $M_i$. The by definition of $r$-interior, 
	for any $x\in (M_i)_r^\circ$ and any $0<t<r$, $B_t(x,M_i)$ has a compact closure in $M_i$. 
	
	By Lemma \ref{lem-balls}, for any $0<t\le r$, $B_t(x, (M_i)_r^\circ, d_M)=B_t(x,M)\cap (M_i)_r^\circ$ can be covered by $A_n(\epsilon,t)$ of $\epsilon$-balls $B_\epsilon(q_{k},M_i)$ where $q_k\in (M_i)_r^\circ$.
	
	Hence for $t\le r$, a sequence 
	$(B_t(p_i, (M_i)_r^\circ),d_{M_i})$ of $t$-ball in $((M_i)_r^\circ,d_{(M_i)_r^\circ})$ equipped with the restricted metric $d_{M_i}$ admits a converging subsequence in the Gromov-Hausdorff topology.
	
	Next, we apply a Hopf-Rinow-typed argument to show that the precompactness holds for any $R>0$.
	
	Indeed, let $R=\sup\{t: (B_t(p_i, (M_i)_r^\circ),d_{M_i})\text{ admits a convergent subsequence}\}$.  If $R=R_0<\infty$, then a contradiction will be derived from the 2nd part of the proof of Lemma \ref{lem-2-1}.
	
	For \eqref{thm-int-2}, let $M$ be a manifold in $\mathcal{M}_\partial(r,D,n)$. Since $W\subset M_r^\circ$ and $\operatorname{diam}(W,d_W)\le D$, by the proof of \eqref{thm-int-1}, $(W,d_M)$ is uniformly totally bounded. 
	
	At the same time, $\overline{B_{r}(W,M)}=M$, where by Lemma \ref{lem-balls}, for each $x\in W$, $(B_r(x,M),d_M)$ is also uniformly totally bounded. So is $(M,d_M)$.
	
	For \eqref{thm-int-3}, since each manifold $M\in \mathcal{M}_b(r,D,n)$ has diameter $\le D$, we directly show that for some $\epsilon_i\to 0$, the covering number of $M$ is uniformly bounded.
	
	Indeed, since $\{(\partial M,d_M)\}$ is precompact in the Gromov-Hausdorff topology, for any $\epsilon>0$, there are at most $B(\epsilon)$ $\epsilon$-discrete points $x_k$ in $\partial M$. It follows that $B_{\epsilon/10}(\partial M, \overline{M})$ is contained in $\cup_k B_{2\epsilon}(x_k,\overline M)$, where $\overline{M}$ is the completion of $M$. 
	
	On the other hand, for $\epsilon= 20r_i$, since $\operatorname{diam}(M_{\epsilon/20}^\circ)\le D$, by \eqref{thm-int-1}, $M_{\epsilon/20}^\circ$ can be covered by $A(\epsilon,D)$ $2\epsilon$-balls $B_{2\epsilon}(y_l)$. 
	
	By the fact that $M_{\epsilon/20}^\circ \cup B_{\epsilon/10}(\partial M, \overline{M})=\overline{M}$, $\overline{M}$ can be covered by $A(\epsilon,D)+B(\epsilon)$ $2\epsilon$-balls. Hence $\mathcal{M}_b(r,D,n)$ is precompact.

\end{proof}

\begin{proof}[Proof of Theorem \ref{thm-cptbdy}]
	~
	
	Since $\{(\partial W, d_W, p)\}$ is Gromov-Hausdorff precompact, for any $\epsilon>0$, $B_{r_0/2}(p, \partial W, d_W)$ contains at most $A_0(\epsilon)$ $\epsilon$-discrete points $x_k\in \partial W$.
	It follows that $\epsilon/10$-neighborhood of $B_{r_0/2}(p,\partial W, d_W)$ is contained in $\cup_k B_{2\epsilon}(x_k,W)$.
	
	At the same time, by Lemma \ref{lem-balls} for any $q\in W_{\epsilon/20}^\circ$, $B_{r_0}(q,W_{\epsilon/20}^\circ,d_W)\subset B_{r_0}(q,M)$ can be covered by $A_n(\epsilon,r_0)$ many $\epsilon/20$-balls $B_{\epsilon/20}(y_l,M)$, where $y_l\in W_{\epsilon/20}^\circ$. And thus all balls $B_{\epsilon/20}(y_l,M)$ coincide with $B_{\epsilon/20}(y_l,W)$ as subsets in $M$.
	
	Because $W_{\epsilon/20}^\circ \cup B_{\epsilon/10}(\partial W, W)=W$, let us take $q\in W_{\epsilon/20}^\circ \cap B_{r_0/2}(p,W)$ (when it is nonempty), then the union of $B_{r_0}(q,W_{\epsilon/20}^\circ,d_W)$ and $\cup_kB_{2\epsilon}(x_k,W)$ covers $B_{r_0/2}(p, W)$. Hence $B_{r_0/2}(p,W)$ is uniformly totally bounded. So is $B_{r_0/4}(z,W)$ for any $z\in W$.
	
	Because $(W,d_W)$ is a length space, Theorem \ref{thm-cptbdy} directly follows from a Hopf-Rinow-typed as that for Theorem \ref{mthm-1}.
\end{proof}

\section{Precompactness principle for connected subsets with non-length metrics}\label{sec:precompact-non-length}

In this section we give two precompactness principles for connected subsets with arbitrary boundary equipped with ``$r$-extrinsic''  and ``$\delta$-intrinsic'' metrics respectively, which are defined in below. 

For any subset $X$ in a length space $M$, its \emph{$r$-extrinsic metric} is defined to be the restriction $\left.d_{B_r(X_i,M_i)}\right|_{X}$ of the length metric of its $r$-neighborhood $B_{r}(X,M)$.  
Following Theorem \ref{mthm-1}, it turns out that subsets equipped with their $\delta$-intrinsic metrics definite away from incomplete places of ambient spaces are naturally Gromov-Hausdorff precompact; compare with Theorem \ref{thm-int} (1).

\begin{corollary}\label{cor-1}
	Let $(X_i,p_i)$ be sequence of connected subsets with base points $p_i$ in Riemannian $n$-manifolds $M_i$ whose $\operatorname{Ric}_{M_i}\ge -(n-1)$ and $r_0$-neighborhoods $B_{r_0}(X_i,M_i)$ have complete closures in $M_i$. 
	\begin{enumerate}
		\item\label{cor-1.1} For any $0<r<r_0$, $\left(X_i, p_i\right)$ endowed with its $r$-extrinsic metric is precompact in the pointed Gromov-Hausdorff topology.
		\item\label{cor-1.2} If $\operatorname{diam}\left(X_i,d_{X_i}\right)\le D$ for each $i$, then for $r=r_0$, $\left(X_i,p_i,d_{B_{r_0}(X_i,M_i)}\right)$ is also precompact in the pointed Gromov-Hausdorff topology.
	\end{enumerate}
	
\end{corollary}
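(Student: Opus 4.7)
The plan is to apply Theorem~\ref{mthm-1} to the open $r$-neighborhood $W_i := B_r(X_i, M_i)$, a domain in $M_i$ containing $X_i$, and then transfer pointed Gromov-Hausdorff precompactness of $(W_i, d_{W_i}, p_i)$ to the subset $X_i$ equipped with the restricted length metric $d_{W_i}|_{X_i}$, which is precisely the $r$-extrinsic metric. For part \eqref{cor-1.1}, I would first verify that $W_i \in \mathcal{D}(n, r_0 - r, t_0, s(t))$ with $t_0 \in (0, r)$ and $s(t) = t$. Condition \eqref{mthm-1.1} is immediate since $B_{r_0-r}(W_i, M_i) \subset B_{r_0}(X_i, M_i)$ has complete closure and $\Ric \ge -(n-1)$ by hypothesis. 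For the $1$-Lipschitz-type undistortedness \eqref{mthm-1.3}, the key observation is a path-containment lemma: any path in $M_i$ from $y \in W_i$ to $x \in X_i$ of length $< r$ stays inside $W_i$, because every intermediate point $z$ satisfies $d_{M_i}(z, X_i) \le d_{M_i}(z, x) < r$; in particular $d_{W_i}(y, x) = d_{M_i}(y, x)$ whenever this last quantity is $< r$. Given $y \in W_i$ with $\rho := d_{M_i}(y, X_i) \in [r - t, r)$, pick $x \in X_i$ almost realizing $\rho$ and traverse a nearly length-minimizing $W_i$-path $\gamma \colon [0, L] \to W_i$ from $y$ to $x$ up to arclength $s = L - r + t + \eta$ (for small $\eta > 0$). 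The endpoint $z = \gamma(s)$ satisfies $d_{M_i}(z, X_i) \le L - s < r - t$, hence $d_{W_i}(z, \partial W_i) \ge r - d_{M_i}(z, X_i) > t$ and $z \in (W_i)_t^\circ$, while $d_{W_i}(y, z) \le s \le t + O(\eta)$. Letting $\eta \to 0$ gives $y \in \overline{B_t((W_i)_t^\circ, W_i)}$, so $W_i$ is of $1$-Lipschitz type. Theorem~\ref{mthm-1} (or equivalently Theorem~\ref{thm-lipschitz-type}) then yields pointed Gromov-Hausdorff precompactness of $(W_i, d_{W_i}, p_i)$; since $\bar B_R(p_i, X_i) = X_i \cap \bar B_R(p_i, W_i)$ in the $r$-extrinsic metric, the uniform total boundedness furnished by Gromov's principle passes to the subset, proving \eqref{cor-1.1}.

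For part \eqref{cor-1.2}, I would apply \eqref{cor-1.1} with any fixed $r' \in (0, r_0)$, for instance $r' = r_0/2$, to obtain pointed Gromov-Hausdorff precompactness of $(X_i, p_i)$ in the $r'$-extrinsic metric $d_{W_i'}|_{X_i}$, where $W_i' = B_{r'}(X_i, M_i)$. The intrinsic diameter bound $d_{X_i}(x, y) \le D$ furnishes, for any $\varepsilon > 0$, a path inside $X_i \subset W_i'$ of length $\le D + \varepsilon$ between any two points of $X_i$, whence $d_{W_i'}(x, y) \le D$ throughout $X_i$; hence the entire $X_i$ sits inside a single ball $\bar B_{D+1}(p_i, X_i, d_{W_i'}|_{X_i})$ and is uniformly totally bounded. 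Since enlarging the ambient neighborhood only decreases the length metric, $d_{B_{r_0}(X_i, M_i)} \le d_{W_i'}$ on $X_i$, so any $\varepsilon$-net in the $r'$-extrinsic metric remains an $\varepsilon$-net in the $r_0$-extrinsic metric. This delivers pointed Gromov-Hausdorff precompactness of $(X_i, p_i)$ in the $r_0$-extrinsic metric, with diameter at most $D$.

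The principal technical hurdle is the $1$-Lipschitz-type undistortedness verification for $W_i$ in part \eqref{cor-1.1}: establishing the estimate $d_{W_i}(y, (W_i)_t^\circ) \le t$ hinges on selecting near-minimizing paths in $M_i$ between nearby points of $B_{r_0}(X_i, M_i)$ whose lengths are controlled, and this is precisely where the complete-closure hypothesis on $B_{r_0}(X_i, M_i)$ enters essentially. Once that local geometric estimate is established, the remainder of the argument is formal bookkeeping with Theorem~\ref{mthm-1}, Gromov's precompactness principle, and the intrinsic diameter bound.
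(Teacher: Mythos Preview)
Your proof is correct. For part \eqref{cor-1.1} you follow the paper's approach exactly: both apply Theorem~\ref{mthm-1} to $W_i = B_r(X_i, M_i)$, recognized as a $(t, r)$-undistorted domain of $1$-Lipschitz type whose $(r_0 - r)$-neighborhood has complete closure; you simply supply the explicit verification of the $1$-Lipschitz undistortedness that the paper states without detail.

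For part \eqref{cor-1.2} the routes diverge. The paper invokes Theorem~\ref{thm-int}\,\eqref{thm-int-2}, taking $W = B_{r_0/2}(X_i, M_i)$ inside the incomplete manifold $M = B_{r_0}(X_i, M_i)$, and concludes that the \emph{full} neighborhood $(B_{r_0}(X_i, M_i), d_{B_{r_0}(X_i, M_i)})$ is uniformly totally bounded, hence so is its subspace $X_i$. You instead reuse part \eqref{cor-1.1} at radius $r' = r_0/2$, observe that the intrinsic diameter bound forces $\operatorname{diam}(X_i, d_{W_i'}) \le D$, and transfer the resulting $\varepsilon$-nets to the $r_0$-extrinsic metric via the monotonicity $d_{B_{r_0}(X_i, M_i)} \le d_{W_i'}$. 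Your argument is more self-contained since it avoids appealing to Theorem~\ref{thm-int}, while the paper's route gives the mildly stronger byproduct that the entire $r_0$-neighborhood, not only $X_i$, is Gromov--Hausdorff precompact.
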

\begin{proof}
	\eqref{cor-1.1} follows directly from Theorem \ref{mthm-1}, because $(X_i, d_{B_r(X_i,M_i)})$ is a subspace of the $r$-neighborhood $B_r(X_i,M_i)$, which is a $(t,r)$-undistorted domain of $1$-Lipschitz, whose $(r_0-r)$-neighborhood is complete in $M_i$.
	
	For \eqref{cor-1.2}, let us consider $B_{r_0/2}(X_i,M_i)$, whose intrinsic diameter with respect to its length metric $\le D+r_0$. Then $W=B_{r_0/2}(X_i,M_i)$ is contained in $r_0/2$-interior of $(B_{r_0}(X_i,M_i), d_{B_{r_0}(X_i,M_i)})$ such that $\overline{B_{r_0/2}(W,M_i)}=B_{r_0}(X_i,M_i)$. By viewing $W=B_{r_0/2}(X_i,M_i)$ and $M=B_{r_0}(X_i,M_i)$ in \eqref{thm-int-2}, it directly follows from Theorem \ref{thm-int} that $(B_{r_0}(X_i,M_i), d_{B_{r_0}(X_i,M_i)})$ is uniformly totally bounded. So is the subspace $(X_i,d_{B_{r_0}(X_i,M_i)})$.
\end{proof}

\begin{remark}
Note that $r$ cannot equal $r_0$ in \eqref{cor-1.1}. For example, let  $$X_i=\pi^{-1}\left(B_{\frac{\pi}{2}-r_0-2^{-i}}(o)\right)$$ be as in Example \ref{ex-petersen} equipped with the restricted metric $d_{B_{r_0}(X_i,\widetilde{U})}$, where the $\widetilde{U}$ is the universal cover  of $B_{\frac{\pi}{2}}(o) \subset (\mathbb S^2, d_0)$. Then $\left\{\left(X_i, \tilde o, d_{B_{r_0}(X_i,\widetilde{U})}\right)\right\}$ is not precompact. It also illustrates that the diameter bound of $(X_i,d_{X_i})$ is necessary in \eqref{cor-1.2}.	
\end{remark}

One partial motivation to consider $r$-extrinsic metrics is to improve an almost warped product principle  \cite[Theorem 3.6]{Cheeger-Colding1996} by Cheeger-Colding for open annuli in complete Riemannian manifolds with a lower Ricci curvature bound in the sense of Gromov-Hausdorff topology, whose special cases are the almost splitting theorem and ``almost volume cone implies almost metric cone'' fundamentally and widely applied in the study of manifolds with a lower Ricci curvature bound. 

Let us briefly recall \cite[Theorem 3.6]{Cheeger-Colding1996}. Let $(M^n,g)$ be a complete Riemannian $n$-manifold with Ricci curvature $\operatorname{Ric}_{(M,g)}\ge (n-1)\Lambda>-\infty$. Let $K \subset M^n$ be a compact subset. Let $r(x)=d(x,K)$ denote the distance function from $K$ and for $0 < a < b$, let $A_{a,b} = r^{-1} ((a, b))$ be the annulus. For simplicity we assume $A_{a,b}$ is connected. 

Let $f:[a,b]\to \mathbb R$ be a non-negative function and let $\mathcal{F}(r) =-\int_r^bf(u)du$. It is a classical result in Riemannian geometry (cf. \cite[\S I.1]{Cheeger-Colding1996}) that for any $a < c < d < b$, $r^{-1}([c, d])$ is isometric to a warped product $$[c,d]\times_f r^{-1}(c)=\left([c,d]\times r^{-1}(c), dr^2+f^2\cdot f(c)^{-2}g|_{r^{-1}(c)}\right)$$ if and only if $\operatorname{Hess}\mathcal F=f'(r)g$ on $A_{a,b}$. 
 
 Note that, the distance between $(r_1, x_1), (r_2, x_2)$ in a warped product $[a,b]\times_fX$ is given by a function $\rho_f(r_l, r_2, d_X(x_1,x_2))$. Equivalently,  for $\underline y_1=(r_1,x)$, $\underline y_2=(r_2,x)$ and $\underline z_1=(r_1',x')$, $\underline z_2=(r_2',x')$, then distance between points $\underline{y}_2, \underline{z}_2$ satisfies
 \begin{equation}\label{eq-cosine-law}
 	d(\underline{y}_2,\underline{z}_2)=Q\left(r_1,r_1',r_2,r_2',d(\underline{y}_1,\underline{z}_1)\right),
 \end{equation} 
 which is uniquely determined by $r_1,r_1',r_2,r_2',d_X(x,x')$. In the following we call \eqref{eq-cosine-law} the cosine law for $[a,b]\times_f X$.

It was shown in \cite[Theorem 3.6]{Cheeger-Colding1996} that if $\operatorname{Hess}\mathcal F=f'(r)g$ holds in the following $L^1$-sense \eqref{cond-L1-warped}, then the annulus $A_{a,b}$ approximates to a warped product with function $f$ in the Gromov-Hausdorff distance.

 Suppose there is a function $\underline{\mathcal F}:A_{a,b}\to \mathbb R$ satisfying 
\begin{equation}\label{cond-L1-warped}
	\begin{aligned}
\underline{\mathcal F}(A_{a,b})\subset \mathcal F\circ r(A_{a,b}),\qquad \left|\underline{\mathcal F}-\mathcal F\circ r\right|\le \delta,\\
\frac{1}{\operatorname{vol}(A_{a,b})}\int_{A_{a,b}}\left|\nabla \underline{\mathcal F}-\nabla \mathcal F\circ r\right|\operatorname{dvol}\le \delta,\quad \text{and}\\
\frac{1}{\operatorname{vol}(A_{a,b})}\int_{A_{a,b}}\left|\operatorname{Hess}\underline{\mathcal F}- f'(\mathcal F^{-1}(\underline{\mathcal F}))g\right|\operatorname{dvol}\le \delta		
	\end{aligned}
\end{equation}

 For fixed $u>0$, let $$\mathcal{V}(u)=\inf\left\{\frac{\operatorname{vol} (B_u(q))}{\operatorname{vol}(A_{a,b})}: \text{ for all } q\in A_{a,b} \text{ with } u< \min\{b - r(q), r(q) - a\}\right\}.$$
 
 By the segment inequality (\cite[Theorem 2.11]{Cheeger-Colding1996}), the following approximated cosine law of warped product was proved in \cite{Cheeger-Colding1996}.
 \begin{lemma}[{\cite[Proposition 2.80]{Cheeger-Colding1996}}]\label{lem-cosine-law}
 	Given $\epsilon> 0$ and some $p\in A_{a,b}$ with
 	$B_{4R}(p) \subset A_{a,b}$, there exists $\zeta(\epsilon,n,\mathcal V,r(p)-a,b-r(p))$, such that if \eqref{cond-L1-warped} holds for $\delta<\zeta$ then for $x_1, z_1,x_2,z_2\in B_R(p)$ such that $r(z_i)-r(x_i)=d(x_i,z_i)$ ($i=1,2$), 
 	 \begin{equation}\label{ineq-cosine-law}
 	 \left|d\left(z_1,z_2\right)-Q\left(r(x_1),r(z_1),r(x_2),r(z_2),d(x_1,z_1)\right)\right|<\epsilon,
 	 \end{equation}
 	 where $Q$ is the distance between points $\underline{x}_2=(r(x_2), \theta), \underline{z}_2=(r(z_2),\theta')$ in the model warped product $(a,b)\times_f\mathbb R$ given in \eqref{eq-cosine-law}, and $\theta-\theta'$ is chosen so that $d_{(a,b)\times_f\mathbb R}(\underline{x}_1,\underline{z}_1)=d(x_1,z_1)$ with $\underline{x}_1=(r(x_1),\theta)$, $\underline{z}_1=(r(z_1),\theta')$.
 	  \end{lemma}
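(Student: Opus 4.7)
The plan is to reduce the approximate cosine law to an ODE comparison for the length of a one-parameter family of minimizing segments, where the key analytic input is the segment inequality of Cheeger--Colding applied to the $L^1$-Hessian bound in \eqref{cond-L1-warped}. First, I would apply \cite[Theorem 2.11]{Cheeger-Colding1996} to the nonnegative function
\[
h := \bigl|\operatorname{Hess}\underline{\mathcal F} - f'(\mathcal F^{-1}(\underline{\mathcal F}))\,g\bigr|
\]
on $A_{a,b}$. The third line of \eqref{cond-L1-warped} gives $\operatorname{vol}(A_{a,b})^{-1}\int_{A_{a,b}}h \le \delta$, and the lower volume density $\mathcal V$ converts this into the statement that, outside a subset of $B_R(p)\times B_R(p)$ of small product measure, every pair $(y,w)$ is joined by a minimizing segment $\gamma_{yw}$ along which $\int_0^{d(y,w)} h(\gamma_{yw}(s))\,ds$ is controlled by a small multiple of $\delta/\mathcal V$. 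A Fubini-type argument then lets me perturb the prescribed quadruple $(x_1,z_1,x_2,z_2)$ by at most $\epsilon/100$, preserving the radiality relation $r(z_i)-r(x_i)=d(x_i,z_i)$ to order $\epsilon$, so that the minimal segments joining the perturbed points together with the segments $\sigma_t$ of the interpolating family constructed below all lie in this ``good'' set.

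Next, I would set up the comparison family. Let $\gamma_i:[0,1]\to A_{a,b}$ be the minimal radial segment from $x_i$ to $z_i$; the radiality $\gamma_i'\parallel \nabla r$ follows from $r(z_i)-r(x_i)=d(x_i,z_i)$ together with $\nabla \mathcal F \parallel \nabla r$. For each $t\in [0,1]$ let $\sigma_t$ be a minimizing segment from $\gamma_1(t)$ to $\gamma_2(t)$, set $\ell(t) := d(\gamma_1(t),\gamma_2(t))$, and introduce in parallel a one-parameter family $\underline{\sigma}_t$ in the warped product $[a,b]\times_f \mathbb R$ whose endpoints sit at $(r(\gamma_1(t)),\theta)$ and $(r(\gamma_2(t)),\theta')$, with the angular separation fixed so that $\underline{\ell}(0) = \ell(0) = d(x_1,x_2)$. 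Using the first and second variation formulas for arc length along $\sigma_t$, the pointwise identity $\operatorname{Hess}\underline{\mathcal F}(\sigma_t',\sigma_t') = f'(\mathcal F^{-1}\underline{\mathcal F}(\sigma_t)) + (\text{error})$ integrated along $\sigma_t$ by the segment inequality output above, and the first two lines of \eqref{cond-L1-warped} to replace $\underline{\mathcal F}$ by $\mathcal F\circ r$ (so the height coordinate is identified with $r$), I would derive a second-order differential inequality of the form
\[
\bigl|\ell''(t) - \Phi\bigl(\ell(t),\ell'(t),r(\gamma_1(t)),r(\gamma_2(t))\bigr)\bigr| \le \eta(\delta),
\]
where $\Phi$ is precisely the function that governs $\underline{\ell}(t)$ in the warped product model.

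Finally, ODE stability on the compact interval $[0,1]$ closes the loop: the initial data $\ell(0) = \underline{\ell}(0)$ agree by construction, and $\ell'(0)$ agrees with $\underline{\ell}'(0)$ up to $O(\epsilon)$ by the radiality of $\gamma_i$ and the structure of the warped metric along the two ``vertical'' geodesics. Since the coefficients of the model ODE are Lipschitz on the compact range of values taken by $r$, $\mathcal F$, and $f'$ over $B_{4R}(p)$ (with explicit bounds in terms of $n$, $\mathcal V$, $R$, $r(p)-a$, and $b-r(p)$), choosing $\zeta$ sufficiently small forces $|\ell(1) - \underline{\ell}(1)| < \epsilon$; by construction $\underline{\ell}(1) = Q(r(x_1),r(z_1),r(x_2),r(z_2),d(x_1,z_1))$ from \eqref{eq-cosine-law}, which yields \eqref{ineq-cosine-law}.

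The main obstacle is the derivation of the ODE in the middle step. The $L^1$ bound on $\operatorname{Hess}\underline{\mathcal F}-f'(\mathcal F^{-1}\underline{\mathcal F})g$ controls the Hessian only along almost every segment, so the second derivative of $\ell(t)$ must be interpreted in a weak (distributional) sense and combined with the Lipschitz regularity of the family $t\mapsto \sigma_t$ of minimizing segments, which are in general neither unique nor smoothly varying. In addition, two independent sources of error---$\underline{\mathcal F}$ versus $\mathcal F\circ r$, and $\operatorname{Hess}\underline{\mathcal F}$ versus $f'g$---must be tracked separately through the ODE comparison so that the single threshold $\zeta$ dominates their combined effect uniformly on $B_{4R}(p)\subset A_{a,b}$.
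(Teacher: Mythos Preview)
The paper does not prove this lemma at all: it is quoted verbatim as \cite[Proposition 2.80]{Cheeger-Colding1996}, introduced by the sentence ``By the segment inequality (\cite[Theorem 2.11]{Cheeger-Colding1996}), the following approximated cosine law of warped product was proved in \cite{Cheeger-Colding1996}.'' So there is no in-paper proof to compare your proposal against; you are reconstructing Cheeger--Colding's original argument.

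Your outline captures the right architecture---segment inequality applied to $|\operatorname{Hess}\underline{\mathcal F}-f'g|$, perturbation of the quadruple into a good set, a one-parameter family of minimal segments $\sigma_t$, and an ODE comparison with the model---and this is indeed the skeleton of \cite[\S2]{Cheeger-Colding1996}. However, the middle step as you have written it has a gap. You invoke the \emph{second} variation formula and aim for a second-order inequality $|\ell''-\Phi(\ell,\ell',\ldots)|\le\eta(\delta)$. Second variation of arc length introduces the full curvature tensor along $\sigma_t$, which is not controlled by the Ricci lower bound or by \eqref{cond-L1-warped}; the hypothesis only bounds $\operatorname{Hess}\underline{\mathcal F}$ in $L^1$, and this is information about $\nabla^2 r$ (equivalently the second fundamental forms of level sets of $r$), not about $\operatorname{Rm}$. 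In the actual Cheeger--Colding argument the comparison is first-order: one differentiates $\ell(t)$ once using first variation, obtaining the cosines of the angles between $\sigma_t$ and the radial directions $\gamma_i'$ at the endpoints, and then the Hessian bound $\operatorname{Hess}\mathcal F\approx f'g$ (equivalently $\operatorname{Hess} r\approx (f'/f)(g-dr\otimes dr)$) governs how those angles evolve as $t$ increases along the radial geodesics. This yields a closed first-order system for $(\ell,\cos\theta_1,\cos\theta_2)$ whose model solution is exactly the warped-product cosine law, and Gronwall-type stability finishes. Your plan becomes correct once you replace the second-variation/second-order step by this first-variation/first-order mechanism.
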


In order to show for $\alpha > 0$, an annulus $A_{a+\alpha,b-\alpha}$ is close in the Gromov-Hausdorff distance to a warped product $(a + \alpha, b - \alpha)\times_f X$, slightly larger annuli $A_{a+\alpha',b-\alpha'}$ for 
$0\le \alpha'<\alpha$ are chosen and the restricted metric $d^{\alpha',\alpha}=\left.d_{A_{a + \alpha', b - \alpha'}}\right|_{A_{a + \alpha, b - \alpha}}$ on $A_{a + \alpha, b - \alpha}\subset A_{a + \alpha', b - \alpha'}$ is considered. Let $\underline{d}^{\alpha',\alpha}$ denote the corresponding restricted metric on $(a+\alpha,b-\alpha)\times_f X$ from $(a+\alpha',b-\alpha')\times_f X$. Let $\Psi(u_1,.\dots,u_k|.,\dots)$ denote a nonnegative function depending on the numbers, $u_1,\dots u_k$, and some additional parameters, such that when those additional parameters are fixed, one has $\lim_{u_1,\dots,u_k\to 0} \Psi(u_1,\dots,u_k|.,\dots)=0$.

The following improved almost rigidity of warped product follows from Lemma \ref{lem-cosine-law} and Corollary \ref{cor-1}.

\begin{theorem}[{compared with \cite[Theorem 3.6]{Cheeger-Colding1996}}]\label{thm-almost-rigidity}
Let $\alpha-\alpha'>\xi>0$. Assume that \eqref{cond-L1-warped} holds for $\delta<\zeta$, and
\begin{enumerate}
	\item \label{thm-almost-rigidity-1} $\operatorname{diam}(A_{a+\alpha,b-\alpha},d^{\alpha',\alpha}) \le D,$
	\item\label{thm-almost-rigidity-2} for all $x\in r^{-1}(a + \alpha')$, there exists $y \in  r^{-1}(b - \alpha')$ with
	$$ d^{\alpha'-\zeta}(x,y) 
	\le b- a -2\alpha' + \zeta,$$
\end{enumerate}
Then there exists a length space $X$, with $\operatorname{diam}(X) \le c(a,b,\alpha',f ,D)$, such that for the
metrics $d^{\alpha',\alpha}, \underline{d}^{\alpha',\alpha}$
$$ d_{GH}\left(A_{a+\alpha,b-\alpha},(a + \alpha,b - \alpha) \times_f X)\right) \le \Psi(\zeta|\alpha',\xi,n,f,D,\mathcal V).$$	
\end{theorem}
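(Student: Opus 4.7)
The plan is to follow Cheeger-Colding's original strategy for \cite[Theorem 3.6]{Cheeger-Colding1996}, using Corollary \ref{cor-1}\eqref{cor-1.2} to replace the \emph{ad hoc} diameter bound on the warping fiber by the intrinsic diameter bound \eqref{thm-almost-rigidity-1}. The idea is to take the warping fiber $X$ to be (a component of) the level set $r^{-1}(a+\alpha')$, equipped with a suitable extrinsic metric from $A_{a+\alpha',b-\alpha'}$, and to build a Gromov-Hausdorff approximation $\Phi:(a+\alpha,b-\alpha)\times_f X\to (A_{a+\alpha,b-\alpha},d^{\alpha',\alpha})$ by flowing along near-radial geodesics.

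First, I would apply Corollary \ref{cor-1}\eqref{cor-1.2} to $A_{a+\alpha,b-\alpha}$ as a connected subset of the ambient open manifold $A_{a+\alpha',b-\alpha'}$, with $r_0=\xi/2$. Since $\alpha-\alpha'>\xi$, the $\xi/2$-neighborhood of $A_{a+\alpha,b-\alpha}$ has complete closure in $A_{a+\alpha',b-\alpha'}$ (as it is contained in $A_{a+\alpha-\xi/2,b-\alpha+\xi/2}$, which is relatively compact in the complete manifold $M$), and condition \eqref{thm-almost-rigidity-1} provides the required intrinsic diameter bound; the corollary then yields uniform total boundedness of $(A_{a+\alpha,b-\alpha},d^{\alpha',\alpha})$. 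Together with condition \eqref{thm-almost-rigidity-2}, which forces every point of $X = r^{-1}(a+\alpha')$ to be within a definite $d^{\alpha'-\zeta}$-distance of $A_{a+\alpha,b-\alpha}$ via a near-radial curve, this supplies the uniform diameter bound $\operatorname{diam}(X)\le c(a,b,\alpha',f,D)$ and a $\Psi(\zeta)$-net on $X$ with cardinality controlled by $(a,b,\alpha',f,D,\mathcal V)$.

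Second, condition \eqref{thm-almost-rigidity-2} provides for each $x\in X$ a near-minimizing curve $\gamma_x$ from $x$ to a point in $r^{-1}(b-\alpha')$; after reparametrization by $r$, together with the segment inequality \cite[Theorem 2.11]{Cheeger-Colding1996} applied to \eqref{cond-L1-warped}, this produces an approximate radial flow $\Phi:(a+\alpha,b-\alpha)\times X\to A_{a+\alpha,b-\alpha}$ defined up to a $\Psi(\zeta)$-exceptional set of radial lines. Applying Lemma \ref{lem-cosine-law} to pairs $(x_k,\Phi(r_1,x_k))$ and $(x_l,\Phi(r_2,x_l))$ on the net then yields
\[
\bigl|d^{\alpha',\alpha}(\Phi(r_1,x_k),\Phi(r_2,x_l))-\underline{d}^{\alpha',\alpha}((r_1,x_k),(r_2,x_l))\bigr|\le \Psi(\zeta|\alpha',\xi,n,f,D,\mathcal V),
\]
and combined with the $\Psi(\zeta)$-denseness of the image of $\Phi$ this produces the desired Gromov-Hausdorff bound.

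The main obstacle is the mismatch between the pointwise cosine law Lemma \ref{lem-cosine-law} and the integral hypothesis \eqref{cond-L1-warped}: the cosine law applies only at points lying on near-minimizing radial curves, while the $L^1$-smallness controls the failure of this property only on average. This is absorbed into the $\Psi$-error through the segment inequality as in \cite{Cheeger-Colding1996}, but the uniformity of the estimate now hinges on the precompactness of $X$ furnished by Corollary \ref{cor-1}\eqref{cor-1.2}, which keeps the number of net points, and hence the aggregated exceptional measure from the segment inequality, bounded by constants depending only on the listed parameters.
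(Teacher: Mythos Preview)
Your approach follows Cheeger--Colding's original direct construction, building the comparison map $\Phi$ on a net and controlling the error via Lemma~\ref{lem-cosine-law} and the segment inequality. The paper takes a different route: it uses Corollary~\ref{cor-1} (together with the sandwich inequalities \eqref{ineq-almost-rig-1}--\eqref{ineq-almost-rig-2}) only to obtain \emph{precompactness} of the family $(A_{a+\alpha,b-\alpha},d^{\alpha',\alpha})$, passes to a Gromov--Hausdorff limit $A^\infty_{a+\alpha,b-\alpha}$ as $\zeta\to 0$, and then works in the limit where the cosine law \eqref{ineq-cosine-law} and the geodesic-extension property \eqref{thm-almost-rigidity-2} hold \emph{exactly}. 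The fiber $X$ is then taken to be the level set $r_\infty^{-1}(a+\alpha)$ in the limit space, equipped with its induced \emph{length} metric, and the projection is checked to be an isometry; the conclusion follows by contradiction.

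The crucial gap in your proposal is precisely the point that distinguishes Theorem~\ref{thm-almost-rigidity} from \cite[Theorem 3.6]{Cheeger-Colding1996}: the assertion that $X$ is a \emph{length space}. You take $X=r^{-1}(a+\alpha')$ with ``a suitable extrinsic metric from $A_{a+\alpha',b-\alpha'}$''; but a level set with a restricted (or $\delta$-intrinsic) metric is in general not a length space, and the intrinsic length metric on $r^{-1}(a+\alpha')$ carries no uniform control. This is exactly what Cheeger--Colding could not obtain under the hypotheses \eqref{thm-almost-rigidity-1}--\eqref{thm-almost-rigidity-2} alone (see the Remark following the paper's proof). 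The paper circumvents this by producing $X$ inside the limit space, where it inherits an honest length metric from $A^\infty$; your direct construction does not yield this, so as written your argument reproduces Cheeger--Colding's conclusion rather than the improved one.

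A minor point: your application of Corollary~\ref{cor-1}\eqref{cor-1.2} with $r_0=\xi/2$ requires $\operatorname{diam}(X_i,d_{X_i})\le D$ for the \emph{intrinsic} length metric on $A_{a+\alpha,b-\alpha}$, whereas \eqref{thm-almost-rigidity-1} bounds only the smaller metric $d^{\alpha',\alpha}$. This is fixable (e.g.\ via Corollary~\ref{cor-1}\eqref{cor-1.1} or Theorem~\ref{thm-int}\eqref{thm-int-1}, as the paper also notes), but should be addressed.
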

\begin{proof}
 Since $A_{a+\alpha',b-\alpha'}\supset B_{\alpha-\alpha'}(A_{a+\alpha,b-\alpha})$, the $(\alpha-\alpha')$-extrinsic metric on $A_{a+\alpha,b-\alpha}$ satisfies
 \begin{equation}\label{ineq-almost-rig-1}
 	d_{B_{\alpha-\alpha'}(A_{a+\alpha,b-\alpha})}(x,y)\ge d^{\alpha',\alpha}(x,y), \qquad \text{for any $x,y\in A_{a+\alpha,b-\alpha}$}.
 \end{equation} On the other hand, by \eqref{thm-almost-rigidity-2} and the triangle inequality, it is easy to see $A_{a+\alpha',b-\alpha'}\subset B_{\alpha-\alpha'+\zeta}(A_{a+\alpha,b-\alpha})$. Hence for fixed $\zeta_0=\alpha-\frac{1}{2}\alpha'>\zeta$
 \begin{equation}\label{ineq-almost-rig-2}
 	d_{B_{\alpha-\alpha'+\zeta_0}(A_{a+\alpha,b-\alpha})}(x,y)\le d^{\alpha',\alpha}(x,y), \qquad \text{for any $x,y\in A_{a+\alpha,b-\alpha}$}.
 \end{equation} 
 Applying Corollary \ref{cor-1} to the $(\alpha-\alpha')$-extrinsic and $(\alpha-\frac{1}{2}\alpha')$-extrinsic metrics on $A_{a+\alpha,b-\alpha}$, we conclude that the Gromov-Hausdorff precompactness holds for all $(A_{a+\alpha,b-\alpha},d^{\alpha',\alpha})$ with $\Lambda$ and $n$ fixed (the precompactness also follows from Theorem \ref{thm-int} (1), since $A_{a+\alpha,b-\alpha}$ lies in the $(\alpha-\alpha')$-interior of $A_{a+\alpha',b-\alpha'}$).
 
 Let $A^\infty_{a+\alpha,b-\alpha}$ be the Gromov-Hausdorff limit of any convergent subsequence $(A_{a+\alpha,b-\alpha},d^{\alpha',\alpha})$ satisfying \eqref{thm-almost-rigidity-1} as $\zeta\to 0$. Then the cosine law \eqref{ineq-cosine-law} of warped product $(a,b)\times_f \mathbb R$ holds as an equality on every $B_R(p)\subset B_{4R}(p)\subset A^\infty_{a,b}$. So is for \eqref{thm-almost-rigidity-2}. Let $r_\infty$ be the limit of $r$, and $X$ be $r_\infty^{-1}(a+\alpha)$ equipped with the length metric induced from $A^{\infty}_{a+\alpha,b-\alpha}$ rescaled by multiplying $\frac{1}{f(a+\alpha)}$.
 Then by the cosine law \eqref{ineq-cosine-law} and geodesic extension property \eqref{thm-almost-rigidity-2} without error term, it is direct to verify the map $$\pi:A^{\infty}_{a+\alpha,b-\alpha}\to [a+\alpha,b-\alpha]\times_f X,\qquad \pi(z)=(r_\infty(x), y),$$ where $y$ is a closest point in $r^{-1}(a+\alpha)$ to $z$ is any isometry.
 
 Now the conclusion follows from a standard argument by contradiction.
\end{proof}
\begin{remark}
	The hypothesis of Theorem \ref{thm-almost-rigidity} is the same as \cite[Theorem 3.6]{Cheeger-Colding1996}, and the only difference is that, in Theorem \ref{thm-almost-rigidity} $X$ is a length space, which was unknown in \cite[Theorem 3.6]{Cheeger-Colding1996} 
	due to the lack of Gromov-Hausdroff precompactness principle for annuli.  It was proved in \cite{Cheeger-Colding1996} that $X$ can be chosen a length space under the following stronger condition
	$$\frac{\operatorname{vol}(A_{a,b})}{\operatorname{vol}(r^{-1}(a))}\ge (1-\omega)\frac{\int_a^bf^{n-1}(u)du}{f^{n-1}(a)} \quad\text{with}\quad
	\omega=\omega(\zeta,\alpha',n,a,b,f,\mathcal{V}),$$
	which in fact implies \eqref{thm-almost-rigidity-1},  \eqref{thm-almost-rigidity-2} and the Gromov-Hausdorff precompactness.
	
	The proof of Theorem \ref{thm-almost-rigidity} is also simpler than that of \cite[Theorem 3.6]{Cheeger-Colding1996}, which involved the error term in the cosine law \eqref{ineq-cosine-law} and \eqref{thm-almost-rigidity-2}. In comparison, the error term are removed by taking limit in the above proof of Theorem \ref{thm-almost-rigidity}.
\end{remark}

In the original proof of \cite[Theorem 3.6]{Cheeger-Colding1996}, $X$ was constructed as a level set $r^{-1}(a+\alpha')$ equipped with its $\delta$-intrinsic metric (see below) multiplying the rescaled constant $\frac{1}{f(a+\alpha')}$ with $0<\delta<\alpha'$. 

For any subset $Y$ in a length space $M$, the \emph{$\delta$-intrinsic metric} on $Y$ is defined by
\begin{equation*}
	d^\delta_Y(x,y)=\inf\left\{\sum_{k=0}^{N} d(x_k,x_{k+1}): x_k\in Y, x_0=x, x_N=y,  d_M(x_k,x_{k+1})\le \delta\right\}.
\end{equation*}

In comparison, the length metric of $r^{-1}(a+\alpha')$ generally admits no uniform local control. Then it was verified in \cite{Cheeger-Colding1996} that the projection $\pi:A_{a+\alpha',b-\alpha'}\to [a+\alpha',b-\alpha']\times_f X$, $\pi(y)=(r(y),z)$ with $z$ a closest point in $r^{-1}(a+\alpha')$ to $y$ is a Gromov-Hausdorff approximation after restricted to $A_{a+\alpha,b-\alpha}$. 

By the proof of \cite[Theorem 3.6]{Cheeger-Colding1996} and Theorem \ref{thm-almost-rigidity}, under the hypothesis of Theorem \ref{thm-almost-rigidity}, $\left(r^{-1}(a+\alpha'),\frac{1}{f(a+\alpha')}d_{r^{-1}(a+\alpha')}^\delta\right)$ approximates to a length space as $\zeta,\delta\to 0$, where $\zeta$ also depends on $\delta$.

In the end of this section, we prove the following precompactness principle for $\delta$-intrinsic metrics in general.
\begin{corollary}\label{cor-2}
	Let $\{(X_i,p_i)\}$ be a sequence of connected subsets with base points $p_i$ in (maybe incomplete) Riemannian $n$-manifolds $M_i$ such that 
	$$r_1=\inf_i\sup \left\{r>0:\begin{aligned}
		& B_{r}(X_i,M_i) \text{ has a complete closure in $M_i$,}\\
		&\operatorname{Ric}_{B_{r}(X_i,M_i)}\ge - (n-1)	
	\end{aligned} \right\}>0.$$
	Then for any $0<\delta < 2r_1$, the family $\left(X_i, d_{X_i}^\delta, p_i\right)$ is precompact in the pointed Gromov-Hausdorff topology.
\end{corollary}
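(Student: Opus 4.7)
The plan is to reduce the pointed Gromov--Hausdorff precompactness of $\{(X_i, d^\delta_{X_i}, p_i)\}$ to that of slightly larger length-metric neighborhoods. Since $\delta < 2r_1$, I fix $r' \in (\delta/2, r_1)$ and set $W_i' = B_{r'}(X_i, M_i)$. The first step is to verify that $\{W_i'\}$ falls under Theorem~\ref{mthm-1}: for any $r_0 \in (0, r_1 - r')$ the neighborhood $B_{r_0}(W_i', M_i)$ is contained in $B_{r_0+r'}(X_i, M_i)$, which by the definition of $r_1$ has complete closure in $M_i$ with $\operatorname{Ric} \ge -(n-1)$, giving \eqref{mthm-1.1}; and $W_i'$ is a $(t, r')$-undistorted domain of $1$-Lipschitz type, because for any $y \in W_i'$ the minimizing $M_i$-geodesic from $y$ toward a nearest point $x \in X_i$ remains in $W_i'$ (its distance to $X_i$ decreases monotonically) and reaches $(W_i')^\circ_t$ within $d_{W_i'}$-distance $t$ of $y$, giving \eqref{mthm-1.3}. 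Hence $\{(W_i', d_{W_i'}, p_i)\}$ is pointed Gromov--Hausdorff precompact.

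The key second step is to compare $d^\delta_{X_i}$ with $d_{W_i'}$. For $x, y \in X_i$ with $d_{M_i}(x, y) \le \delta$, every point on a minimizing $M_i$-geodesic $\gamma$ joining them is within $\delta/2 < r'$ of $\{x, y\} \subset X_i$, so $\gamma$ lies in $W_i'$ and $d_{W_i'}(x, y) = d_{M_i}(x, y)$. (Existence of $\gamma$ follows from Arzela--Ascoli applied to approximate minimizers, which lie in a set with compact closure thanks to $\delta/2 < r_1$.) Concatenating such geodesics across any $\delta$-chain $p_i = x_0, \ldots, x_N = y$ in $X_i$ produces a $W_i'$-path of the same total length, so $d_{W_i'}(p_i, y) \le d^\delta_{X_i}(p_i, y)$; in particular, the $d^\delta_{X_i}$-ball of radius $R$ around $p_i$ is contained in the corresponding $d_{W_i'}$-ball in $W_i'$.

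For the uniform packing bound, fix $\epsilon \in (0, \delta]$ and let $\{x_j\} \subset X_i$ be $\epsilon$-separated in $d^\delta_{X_i}$. For each pair $j \neq k$: either $d_{M_i}(x_j, x_k) \le \delta$, in which case the second step gives $d_{W_i'}(x_j, x_k) = d^\delta_{X_i}(x_j, x_k) \ge \epsilon$, or $d_{M_i}(x_j, x_k) > \delta \ge \epsilon$, in which case $d_{W_i'}(x_j, x_k) \ge d_{M_i}(x_j, x_k) > \epsilon$. So $\{x_j\}$ is $\epsilon$-separated in $d_{W_i'}$ and lies in a $d_{W_i'}$-ball of radius $R$, whose packing number is bounded uniformly in $i$ by the precompactness of $\{W_i'\}$. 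For $\epsilon > \delta$ one reduces to $\epsilon = \delta$ since any $\epsilon$-separated set is $\delta$-separated. Gromov's precompactness principle then yields the desired precompactness of $\{(X_i, d^\delta_{X_i}, p_i)\}$.

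The delicate point, and the main obstacle, is the coexistence of $d_{W_i'} \le d^\delta_{X_i}$ (which a priori seems to obstruct transferring $\epsilon$-separation downward) with the equality $d_{W_i'} = d^\delta_{X_i}$ at scales $\le \delta$, while at larger scales $d_{W_i'} \ge d_{M_i} > \delta$ delivers separation automatically. The strict inequality $r' > \delta/2$, afforded by the hypothesis $\delta < 2r_1$, is precisely what is needed to retain every minimizing $M_i$-geodesic of length $\le \delta$ inside $W_i'$.
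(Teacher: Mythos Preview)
Your proof is correct and follows essentially the same route as the paper: enlarge $X_i$ to a $1$-Lipschitz--type neighborhood $W_i'$ (the paper takes the closed $\tfrac{\delta}{2}$-neighborhood, you take an open $r'$-neighborhood with $r'\in(\tfrac{\delta}{2},r_1)$), apply Theorem~\ref{mthm-1} to get precompactness of $(W_i',d_{W_i'},p_i)$, and then exploit the two facts $d_{W_i'}\le d_{X_i}^\delta$ globally and $d_{W_i'}=d_{M_i}=d_{X_i}^\delta$ at scales $\le\delta$. The only cosmetic difference is that you transfer \emph{packing} numbers via the dichotomy $d_{M_i}(x_j,x_k)\le\delta$ versus $>\delta$, whereas the paper transfers \emph{covering} numbers by observing that $\epsilon$-balls in $d_{W_i'}$ and $d_{X_i}^\delta$ coincide on $X_i$ for $\epsilon\le\delta$.
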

Before given the proof, let us first point out that the restriction $\delta< 2r_1$ on $d_{X_i}^\delta$ is optimal for Corollary \ref{cor-2}.  For a counterexample, let $X_i=\pi^{-1}(\overline{B_{\frac{\pi}{2}-r_1}(o)})\subset \widetilde{U}$ be as in Example \ref{ex-petersen}, where $\pi:(\widetilde{U},\tilde o)\to (U,o)$ is the universal cover and $U=B_{\frac{\pi}{2}}(o) = (\mathbb S^2, d_0)\setminus\{p,p^*\}$. Then on $X_i\subset U$, \eqref{mthm-1.1} holds for any $0<r_0<r_1$, but fails for $r_1$. If one takes $\delta=2r_1$, then any closed $\delta$-ball $\overline{B_{\delta}(z_i)}$ at $z_i\in \partial X_i$ in $(X_i,d^{\delta}_{X_i})$ contains the whole boundary $\partial X_i$, which is not totally bounded.

\begin{proof}
	It suffices to show that for any $R>0$, and any $0<\epsilon\le \delta$, there are at most $C(\epsilon,R)$ many $\epsilon$-balls whose union covers $B_R(p_i, X_i, d_{X_i}^\delta)$. 
	
	Let us take $W_i=\overline{B_{\frac{\delta}{2}}(X_i, M_i)}$, the closed $\frac{\delta}{2}$-neighborhood of $X_i$ in $(M_i,d_{M_i})$. Then by $\delta< 2r_1$, we see that  $\frac{2r_1-\delta}{4}$-neighborhood of $W_i$ has a complete closure in $M_i$. By Theorem \ref{mthm-1}, $(W_i,d_{W_i},p_i)$ is precompact in the Gromov-Hausdorff topology.
	
	The benefit of $W_i$ is that, the distance $d_{X_i}^\delta$ is realized by the length of curves in $W_i$. This can be seen directly from the definition of $d_{X_i}^\delta$. Hence, we have for any two points $x,y\in X_i$, $d_{X_i}^\delta(x,y)\ge d_{W_i}(x,y)$, and any $R$-ball $B_R(p_i, X_i, d_{X_i}^\delta)$ is contained in $B_R(p_i, W_i)$. 
	
	Now a uniform upper bound on the $\epsilon$-covering number of $B_R(p_i, X_i, d_{X_i}^\delta)$ can be easily derived by the following two facts.
	
	The Gromov-Hausdorff precompactness of $(W_i,d_{W_i},p_i)$ implies that for any $R>0$ and $\epsilon\le \delta$, there are at most $C(\epsilon,R)$ balls $B_{\epsilon/2}(q_k, W_i)$ whose union covers $B_R(p_i, W_i)$. Hence the subset $B_R(p_i,X_i,d_{X_i}^\delta)$ can be covered at most $C(\epsilon,R)$ balls $B_{\epsilon}(q_k', X_i, d_{W_i})$ at $q_k'\in B_{\epsilon/2}(q_k,W_i)\cap X_i$ whenever it is non-empty. 
	
	At the same time, for $x,y\in X_i$, if $d^\delta_{X_i}(x,y)\le \delta$ or $d_{W_i}(x,y)\le \delta$, then by definition of $\delta$-intrinsic metric and $W_i$, $d^\delta_{X_i}(x,y)=d_{M_i}(x,y)=d_{W_i}(x,y)$. Hence $B_{\epsilon}(q_k', X_i, d_{W_i})$ coincides with $B_{\epsilon}(q_k', X_i, d^\delta_{X_i})$.
\end{proof}

\section{A simplified proof of semilocally simply connectedness of a Ricci-limit space}\label{sec:semilocal-sim}
In this section we give a simplified proof of a key fact \eqref{semilocal-sim-1} below in proving the semilocally simply connected property of a Ricci-limit space.

\begin{theorem}[Pan-Wei \cite{Pan-Wei2019}, Wang \cite{Wang2021}]\label{thm-semilocal-sim}
	Let $X$ be a Ricci-limit space, i.e., there is a sequence $(M_i,p_i)$ of complete Riemannian $n$-manifolds with $\Ric\ge -(n-1)$ that converges to $(M_i,p_i)$ in the pointed Gromov-Hausdorff topology. Then $X$ is semi-locally simply connected. That is, for any $x\in X$, there is $r(x)>0$ such that any loop in $B_{r(x)}(x,X)$ is contractible in $X$.
\end{theorem}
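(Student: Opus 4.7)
The plan is to follow the contradiction strategy of Pan--Wei and Wang, but to use the pointed equivariant precompactness of normal covers (Corollary \ref{cor-pre}) in place of the pseudo-group slice theorem. Fix $x_\infty\in X$ and assume for contradiction that no radius $r(x_\infty)>0$ works. Choose a small but definite $R>0$ for which $\overline{B_R(x_\infty,X)}$ is compact, a sequence $r_k\to 0$, and loops $\alpha_k\subset B_{r_k}(x_\infty,X)$ based near $x_\infty$ that fail to be contractible inside $B_R(x_\infty,X)$. Using the approximation $(M_i,p_i)\to(X,x_\infty)$, for each $k$ and all sufficiently large $i$ one may realise $\alpha_k$ as a loop $\alpha_{k,i}\subset B_{r_k+o(1)}(p_i,M_i)$ whose class is nontrivial in
\[
\Gamma_i(r_k,R)\;=\;\pi_1(B_{r_k}(p_i),p_i)/\operatorname{Ker}(\imath_{r_k,R})\,;
\]
otherwise a standard stability/limiting argument would contract $\alpha_k$ in $B_R(x_\infty,X)$.

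Next, Corollary \ref{cor-pre}\eqref{cor-pre-i2} says that the triples $(\widehat B(p_i,r_k,R),\hat p_i,\Gamma_i(r_k,R))$ form a precompact family in the pointed equivariant Gromov-Hausdorff topology. Fixing $k$ and letting $i\to\infty$ yields a subsequential limit
\[
\bigl(\widehat B(p_i,r_k,R),\hat p_i,\Gamma_i(r_k,R)\bigr)\longrightarrow\bigl(\widehat Y_k,\hat x_k,G_k\bigr)
\]
whose quotient is $(B_{r_k}(x_\infty,X),x_\infty)$. A diagonal extraction $k\to\infty$ produces a single limit triple $(\widehat Y,\hat x,G)$. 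The displacements $d(\hat p_i,[\alpha_{k,i}]\hat p_i)\le 2r_k+o(1)\to 0$ show that the nontrivial classes $[\alpha_{k,i}]$ subconverge to nontrivial elements of $G$ arbitrarily close to the identity.

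Finally, Corollary \ref{cor-pre}\eqref{cor-pre-i1} combined with Theorem \ref{thm-lipschitz-type} gives uniform global doubling of each $\widehat B(p_i,r_k,R)$, so $\widehat Y$ is a pointed measured limit of spaces with $\Ric\ge -(n-1)$, and $G\subset \operatorname{Isom}(\widehat Y)$ is a genuine Lie group by the Cheeger--Colding / Colding--Naber regularity of isometry groups on Ricci limits. Nontrivial isometries in $G$ arbitrarily close to the identity force a nontrivial connected subgroup $H\subset G$, whose short orbits pull back (via equivariant approximations) to loops in $B_{r_k}(p_i,M_i)$ that lift to arcs in the normal cover $\widehat B(p_i,r_k,R)$ and are therefore contractible in $B_R(p_i,M_i)$. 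Passing to the limit produces a contractible competitor for $\alpha_k$ in $B_R(x_\infty,X)$, contradicting the choice of $\alpha_k$.

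The crux is the third step: constructing the limit Lie group action and ruling out short nontrivial elements. Wang's original argument had to invoke the slice theorem for pseudo-group actions from Pan--Wang \cite{Pan-Wang2021}, since without an a priori equivariant precompactness principle the relevant covers of metric balls carried only a pseudo-group action on a not-necessarily-compact space. Corollary \ref{cor-pre}\eqref{cor-pre-i2} upgrades these pseudo-groups to honest deck-transformation groups on precompact normal covers, so the proof reduces to standard equivariant Gromov-Hausdorff convergence together with the Lie structure of isometry groups of Ricci limits, bypassing the pseudo-group machinery and substantially shortening the argument.
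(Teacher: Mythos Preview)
Your proposal has a genuine gap: it treats the passage of contractibility from the approximating manifolds $M_i$ to the limit $X$ as routine, but that passage is precisely the heart of the theorem. Concretely, you assert that if $[\alpha_{k,i}]$ were trivial in $\Gamma_i(r_k,R)$ (equivalently, $\alpha_{k,i}$ bounds a disk in $B_R(p_i,M_i)$), then ``a standard stability/limiting argument would contract $\alpha_k$ in $B_R(x_\infty,X)$''. No such standard argument exists: a null-homotopy in $M_i$ is a map from a $2$-disk whose image may have unbounded diameter or complexity as $i\to\infty$, and maps from disks do not pass to GH limits. The same circularity appears in your last step, where you again ``pass to the limit'' a contractibility in $B_R(p_i,M_i)$ to conclude $\alpha_k$ is contractible in $X$. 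There are also secondary issues: the diagonal extraction over $k$ compares normal covers $\widehat B(p_i,r_k,R)$ sitting over balls of \emph{different} radii $r_k\to 0$, so the resulting ``single limit triple $(\widehat Y,\hat x,G)$'' has no clear meaning; and since the displacements $d(\hat p_i,[\alpha_{k,i}]\hat p_i)\to 0$, the limit of $[\alpha_{k,i}]$ in $G$ is the identity, not a nontrivial element near it.

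The paper's use of Corollary~\ref{cor-pre} is more modest in scope. It does \emph{not} attempt to bypass the iterated triangular decomposition of Pan--Wei/Wang; it only simplifies the proof of the key approximation fact~\eqref{semilocal-sim-1}: given $0<l<1/2$, there is $r=r(x,l)>0$ so that every loop $\gamma\subset B_r(x,X)$ admits approximants $\gamma_i\subset M_i$ homotopic in $B_{4l}(x_i)$ to loops of length $\le\epsilon_i\to 0$. For this, one takes the equivariant limit $(\widehat B(x_i,l,4l),\hat x_i,\Gamma_i(l,4l))\to(Y_{l,4l},\hat x,G)$ from Corollary~\ref{cor-pre}, observes $G$ is a Lie group by Colding--Naber, and applies Palais's \emph{standard} slice theorem (no pseudo-group machinery needed) to lift small loops in the quotient to paths in a slice; approximating these paths in $\widehat B(x_i,l,4l)$ and projecting yields the desired short homotopies in $B_{4l}(x_i)$. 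The transfer of these homotopies to an honest contraction in $X$ still requires step~\eqref{semilocal-sim-2} and the Pan--Wei iteration, which your argument omits.
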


In the above theorem, it was first proved by Pan-Wei \cite{Pan-Wei2019} for non-collapsed limits. Then based on the ideas in \cite{Pan-Wei2019} and techniques built by Pan-Wang \cite{Pan-Wang2021}, it was proved by Wang \cite{Wang2021} for the general case including collapsed Ricci-limit spaces.

The key observations in \cite{Wang2021} are as follows.
\begin{enumerate}
	\item\label{semilocal-sim-1} For any $x\in \bar B_{1/2}(p)$ and any $0<l<1/2$, there is a small $r(x,l)>0$ such that any loop $\gamma$ at $y\in B_r(x)$ admits approximations loops $\gamma_i$ at $y_i\in M_i$ up to a subsequence, which is homotopic to loops $\gamma_i'$ whose length $\le \epsilon_i\to 0$ by a homotopy whose image lies in $B_{4l}(x_i)$;
	\item\label{semilocal-sim-2} The homotopy between $\gamma_i$ and $\gamma_i'$ gives rise to a triangular decomposition $\Sigma$ of the unit disk $D^2$ with a continuous map $H:K^1\to B_{5l}(x)$ from the $1$-skeleton of $\Sigma$ such that for each $\triangle$ of $\Sigma$, $\operatorname{diam}(\triangle)<\epsilon'_i\to 0$, $\operatorname{diam}H(\partial \triangle)\le 100\epsilon_i$ and $H|_{\partial D}=\gamma$.
\end{enumerate}

Then following Pan-Wei \cite{Pan-Wei2019} (see also \cite[Lemma 3.3]{Wang2021}),  the iterated decomposition provided by \eqref{semilocal-sim-2} for each small loops uniformly converges to a continuous homotopy between $\gamma$ and a trivial loop in $\bar B_{1/3}(p)$; for details see \cite{Wang2021}.

The original proof of \eqref{semilocal-sim-1} by Wang \cite{Wang2021} involves a nontrivial technical tool, i.e. the slice theorem of pseudo-group actions on a Ricci-limit space proved by Pan-Wang \cite{Pan-Wang2021}.
Since the universal cover $\widetilde{B_l(x_i)}$ of $B_l(x_i)$ does not converge in the pointed Gromov-Hausdorff topology,  the pseudo-group actions $G_i$ on the $l$-ball $B_l(\tilde x_i)$ in $\widetilde{B_l(x_i)}$ was considered, where $G_i=\{g\in \Gamma_i \;|\; d(g\tilde x_i,\tilde x_i)\le 1/100\}$ is the restriction of the deck-transformation $\Gamma_i$ on the universal cover $\widetilde{B_l(x_i)}$. By passing to a subsequence, one has
$$\begin{CD}
	(\bar B_l(\tilde x_i), \tilde x_i, G_i) @>GH>> (\bar B_l(\tilde x), \tilde x, G)\\
	@VVV @VVV\\
	(\bar B_l(x_i),x_i) @>GH>> (\bar B_l(x),x)
\end{CD}$$

As an extension of the slice theorem of a proper Lie group action by Palais \cite{Palais1961}, it was proved in Pan-Wang \cite{Pan-Wang2021} that the limit pseudo-group $G$ admits a slice $S$ at $\tilde x$, i.e., a subset $S\subset B_l(\tilde x)$ such that the followings hold.
\begin{enumerate}
	\item $\tilde x\in S$ and $S$ is preserved by the isotropy group $G_{\tilde x}=\{g\in G\;|\; g\tilde x=\tilde x\}$;
	\item the $G$-orbit $GS$ along $S$ is open, and $[g,s]\mapsto gs$ is a $G$-homeomorphism between $G\times_{G_{\tilde x}}S$ and $GS$. In particular, $S/G_{\tilde x}$ is homeomorphic to $GS/G$.
\end{enumerate}
By taking $B_r(x)$ is contained in $GS/G$, any loop at $y$ in $B_r(x)$ admits a lifting path $\tilde \gamma$ in $S$. Then approximated path $\tilde \gamma_i$ in $B_{4l}(\tilde x_i)$ can be constructed, which is homotopic to a small path $\tilde \gamma_i'$ of length $\le 2\epsilon_i$ in the universal cover of $\bar B_{4l}(x_i)$, whose projections are loops $\gamma_i$ and $\gamma_i'$ together with the homotopy lying in $B_{4l}(x_i)$.

We now point out that,
instead of the pseudo-group actions, the equivariant precompactness provided by Corollary \ref{cor-pre} yields a simple and direct proof of \eqref{semilocal-sim-1}.

Indeed, let us consider the equivariant convergence
$$\begin{CD}
	(\widehat{B}(x_i),l,4l), \hat x_i, \hat \Gamma_i(x_i,4,4l)	@>GH>> (Y_{l,4l}, \hat x, G)\\
	@VVV @VVV\\
	(B_{l}(x_i),x_i) @>GH>> (B_l(x),x)
\end{CD}$$
Then by Colding-Naber \cite{Colding-Naber2012}, $G$ is a Lie group. By the standard slice theorem by Palais \cite{Palais1961},
a definite small loop can be lift to a path in a slice in $Y_{l,4l}$. Then the approximated paths in $\widehat{B}(x_i,l,4l)$ is homotopic to a new path with length $\le \epsilon_i$ in $\widetilde{B_{4l}(x_i)}$, both of which project to loops and homotopy in $B_{4l}(x_i)$.

\begin{remark}
	Another proof of Theorem \ref{thm-semilocal-sim} can be found in Wang \cite{Wang2023}, which relies on the stability of relative $\delta$-cover over a ball in a Ricci-limit space by Sormani-Wei \cite[Theorem 3.12]{Sormani-Wei2004} (cf. Mondino-Wei \cite[Theorem 4.5]{Mondino-Wei2019} for $\operatorname{RCD}^*(K,N)$-spaces). This proof does not depend on the property \eqref{semilocal-sim-1}.
\end{remark}

\end{document}